\newtheorem{thm}{Theorem}[section]
\newtheorem{cor}[thm]{Corollary}
\newtheorem{lem}[thm]{Lemma}
\newtheorem{prop}[thm]{Proposition}
\newtheorem{example}[thm]{Example}
\theoremstyle{definition}
\newtheorem{defn}[thm]{Definition}
\theoremstyle{remark}
\newtheorem{rem}[thm]{Remark}
\numberwithin{equation}{section}
\def\b{{\mathfrak b}}
\newcommand{\A}{\mathbb{A}}
\newcommand{\M}{M}
\newcommand{\C}{\mathbb{C}}
\newcommand{\R}{\mathbb{R}}
\newcommand{\F}{\mathbb{F}}
\newcommand{\K}{\mathbb{K}}
\newcommand{\B}{\mathbb{B}}
\newcommand{\LX}{ {\mathcal{ L}} }
\newcommand{\Hom}{\mbox{Hom}}
\begin{document}

\title{ Cohomology and Versal Deformations of Hom-Leibniz algebras}

\author{Faouzi  Ammar, Zeyneb Ejbehi and Abdenacer Makhlouf }

\maketitle

         
%
%
%
%

\begin{abstract}
The purpose of this paper is to study global deformations of Hom-Leibniz algebras. We introduce a cohomology for Hom-Leibniz algebras with values in a Hom-module, characterize versal deformations and provide an example.
\end{abstract}
MSC classification:  17A32,17A30,17B56,13D10\\
Keywords : Hom-Leibniz algebra, cohomology,  deformation, versal deformation.

  \section*{Introduction}
 In this paper we generalize the study of  versal deformations of Leibniz algebras developed in \cite{Fial,Fialowoski,Ashis} to Hom-Leibniz algebras. Hom-structures  were introduced in \cite{Ammar,F.Ammar,Makhl Silv Hom,Makhl Silv Not,Yau homol}. Cohomologies of Hom-Lie and Hom-associative algebras  
were  developed in \cite{Ammar,Makhl Silv Not}. 
The structure of the paper is as follows: in Section 1, we summarize  the definitions and   introduce a  cohomology for Hom-Leibniz algebras.
In Section 2, we introduce definitions of global deformation,  infinitesimal deformations and  versal deformation of Hom-Leibniz algebras, as well as the notions of equivalence between two global deformations.
In Section 3, we  discuss universal    infinitesimal deformations of Hom-Leibniz algebras. We construct a canonical unique infinitesimal deformation which induces all the others.
In Section 4, we recall Harrison cohomology related to commutative associative algebra and we compute obstructions.
In Section 5, we extend  universal deformations to versal one.
In section 6, we connect obstructions to Massey brackets.
We provide, in Section 7,  an explicit class of  Hom-Leibniz algebras examples for which we  calculate cohomology and versal deformations.
 \\Throughout this paper $\K$ denotes an algebraically
closed field of characteristic 0.

\section {Hom-Leibniz algebras and  Cohomology}
A class of quasi Leibniz algebras was introduced in \cite{LS1} in
connection to general quasi-Lie algebras following the standard
Loday's conventions for Leibniz algebras  \cite{Loday} (i.e. right Loday
algebras). Hom-Leibniz algebras form a subclass bordering  Hom-Lie algebras, which were discussed in   \cite{Makhl Silv Hom}.   In this section we summarize the definitions and introduce a cohomology for this class of Hom-algebras. 
\begin{defn}
A Hom-Leibniz algebra is a $\K$-module $\LX
$ equipped with a bracket operation
 and a  linear map $\alpha: \LX
\rightarrow\LX
$ that satisfy the Hom-Jacobi identity 
\begin{equation}\label{HomJacobi}
\big[\alpha(x),[y,z]\big]=\big[[x,y],\alpha(z)\big]-\big[[x,z],\alpha(y)\big]\;\;  \forall x,y,z\in \LX.
\end{equation}
The triple $(\LX, [., .], \alpha)$ denotes the Hom-Leibniz algebra. In the sequel, we deal with multiplicative Hom-Leibniz algebras, i.e. $\alpha$ is an algebra  morphism, that is the condition $\alpha [x,y]=[\alpha (x),\alpha (y)]$ is satisfied for all $x,y\in \LX$.

Let $(\LX, [., .], \alpha)$ and $(\LX^{\prime}, [., .]^{\prime}, \alpha^{\prime})$ be two Hom-Leibniz algebras. A linear map $f\ : \LX \rightarrow \LX^{\prime}$ is a \emph{morphism of Hom-Leibniz algebras} if 
$
[., .]^{\prime} \circ (f \times f) = f \circ [., .]$   and $ f \circ \alpha = \alpha^{\prime}\circ f.
$ 
It is said to be a \emph{weak morphism} if holds only the first condition.

\end{defn}
\begin{rem}
A Hom-Lie algebra is a Hom-Leibniz algebra for which the bracket is skew-symmetric.
\end{rem}
  
\begin{defn}
A representation  $\M$   of a Hom-Leibniz algebra $(\LX, [., .], \alpha)$ with
respect to  $A \in \mathfrak{gl}(\M )$ where $\M$ is 
a $\K$-module,  is defined with  two actions (left and right) on $\LX
$. These actions are denoted by the following  brackets as well 
$$[.,.]: \LX
 \times\M\longrightarrow\M\;\; \text{and} \;\; [.,.] :\M\times\LX
\longrightarrow\M\;$$
satisfying 
$$[\alpha (l),\beta (m)]=\beta [l,m]\ \text{ and } [\beta (m),\alpha (l)]=\beta [m,l]\ \text{ for } l\in \LX,\ m\in M,
$$
and such that 
$$\big[\gamma(x),[y,z]\big]=\big[[x,y], \gamma(z)\big]- \big[[x,z], \gamma(y)\big]$$
holds, whenever one of the variables is in $\M$ and the two others in $\LX
$ and 
where $\gamma =\alpha$ if the element is in $\LX
$  and $\gamma =A$ if it is in $\M$.
\end{defn}

Let $(\LX, [., .], \alpha)$  be a Hom-Leibniz algebra and $(\M,\gamma)$ a representation.

Define $\mathcal{C}^n(\LX
,\M):=Hom_{\K}(\LX
^{\times n},\M)$,$\ n\geq 0$, 
 such that a cochain $\varphi\in \mathcal{C}^{n}(\LX
,\M
)$ is an $n$-linear  map
$\varphi : \LX
^n \rightarrow \M
 \; \hbox{satisfying }\;\forall x_0,x_1,...,x_{n-1} \in \LX
$:
 $$\gamma \circ \varphi(x_0,...,x_{n-1})=\varphi\big(\alpha (x_0),\alpha(x_1),...,\alpha(x_{n-1})\big).$$

Let $\delta^n:\mathcal{C}^n(\LX
,\M)\rightarrow \mathcal{C}^{n+1}(\LX
,\M)$
be a $\K$-homomorphism defined by 
\begin{align*}
 &\delta^n\varphi(x_1,...,x_{n+1})
  =[\alpha^{n-1}(x_1),\varphi(x_2,...,x_{n+1})]+\sum_{i=2}^{n+1}(-1)^i\big[\varphi(x_1,...,
\widehat{x_i},...,x_{n+1}),\alpha^{n-1}(x_i)\big]\\
&+\sum_{1\leq i<j\leq n+1}(-1)^{j+1} \varphi(\alpha(x_1),...,\alpha(x_{i-1}),[x_i,x_j],
\alpha(x_{i+1}),...,\widehat{x_j},...,\alpha(x_{n+1})),
\end{align*}
where $\widehat{x_i}$ means that the element $x_i$ is omitted.
\begin{prop} The composite 
$\delta^{n+1}\circ\delta^n$ vanishes.

Therefore $(\mathcal{C}^{\star}(\LX
,\M),\delta )$ is a cochain complex, defining a cohomology of a Hom-Leibniz algebra $\LX$ with coefficients in the representation $\M$.

\end{prop}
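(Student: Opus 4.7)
The plan is to verify $\delta^{n+1}\delta^n\varphi=0$ by direct expansion, following the classical Loday--Pirashvili computation for ordinary Leibniz cohomology while tracking the $\alpha$-twists. The new ingredients that make the proof go through are (i) the Hom-Jacobi identity \eqref{HomJacobi}, (ii) multiplicativity $\alpha[x,y]=[\alpha(x),\alpha(y)]$, which lets us commute powers of $\alpha$ with brackets, (iii) the equivariance condition $\gamma\circ\varphi=\varphi\circ\alpha^{\times n}$ on cochains, and (iv) the compatibilities $[\alpha(l),\gamma(m)]=\gamma[l,m]$ and $[\gamma(m),\alpha(l)]=\gamma[m,l]$ of the representation. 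These four conditions are exactly what is needed to substitute one ``$\alpha$'' for another whenever the positions of the entries get shifted by the coboundary.

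Decompose $\delta^n\varphi=\mathrm{L}(\varphi)+\mathrm{R}(\varphi)+\mathrm{I}(\varphi)$ where $\mathrm{L}$ is the leading left-bracket term, $\mathrm{R}$ is the sum of right-bracket terms indexed by $i\geq 2$, and $\mathrm{I}$ is the internal double sum over $1\leq i<j\leq n+1$. Applying $\delta^{n+1}$ produces nine families of terms. I would organize them into three packages:
\begin{itemize}
\item The $\mathrm{I}\circ\mathrm{I}$ package. Terms arising from two nested internal brackets split according to whether the bracket indices overlap. For disjoint pairs $\{i,j\}$ and $\{k,l\}$ the sign bookkeeping is exactly the one that kills the analogous terms in classical Leibniz cohomology; the $\alpha$-twists match because multiplicativity gives $\alpha[x_i,x_j]=[\alpha(x_i),\alpha(x_j)]$, and equivariance of $\varphi$ absorbs the extra $\alpha$ that appears when the cochain is re-indexed. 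For overlapping triples one obtains exactly a Hom-Jacobi expression in three entries of $\LX$, which vanishes by \eqref{HomJacobi}.
\item The mixed $\mathrm{L}\circ\mathrm{I}$, $\mathrm{R}\circ\mathrm{I}$, $\mathrm{I}\circ\mathrm{L}$ and $\mathrm{I}\circ\mathrm{R}$ packages. These pair off: an internal bracket followed by a boundary bracket at the ``other'' endpoint gives, after applying multiplicativity to re-express $\alpha^{n}(x_i)=\alpha(\alpha^{n-1}(x_i))$ and equivariance to pull $\gamma$ inside $\varphi$, the same term produced in the opposite order with opposite sign.
\item The $\mathrm{L}\circ\mathrm{L}$, $\mathrm{L}\circ\mathrm{R}$, $\mathrm{R}\circ\mathrm{L}$ and $\mathrm{R}\circ\mathrm{R}$ package. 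Here one has two nested brackets in which one of the three arguments is the value of $\varphi$, hence lies in $\M$. This is precisely the situation covered by the representation axiom: the Hom-Jacobi identity for a representation (with two arguments in $\LX$ and one in $\M$, the twist being $\alpha$ on $\LX$ and $\gamma$ on $\M$) cancels the three boundary-boundary contributions against the relevant pieces from the overlapping $\mathrm{I}\circ\mathrm{I}$ subcase.
\end{itemize}

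The main obstacle is purely bookkeeping: making sure that every power $\alpha^{n-1}$ of the twist that appears on an outer entry in $\delta^{n+1}$ can be rewritten as $\alpha^{(n+1)-1}$ applied to the corresponding original entry, after absorbing one extra $\alpha$ either through multiplicativity on a bracket or through the equivariance of $\varphi$ on a value $\varphi(\ldots)$. Once one checks that these $\alpha$-shifts are consistent in each of the three packages above, the proof reduces to the classical sign cancellation for Leibniz cohomology together with a single application of the Hom-Jacobi identity in two flavors (all entries in $\LX$, and one entry in $\M$), and everything collapses to zero.
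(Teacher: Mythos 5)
Your plan is essentially the same direct-expansion argument the paper intends: the paper's entire proof is the single sentence that the claim follows ``by straightforward computation,'' similar to the Hom-Lie case of \cite{Ammar}, so your decomposition of $\delta$ into the leading, right-bracket, and internal parts, with the nine cross-packages cancelled via the Hom-Jacobi identity, multiplicativity of $\alpha$, cochain equivariance, and the representation axioms, is in fact more explicit than what the paper provides. One small caveat: the outer--outer ($\mathrm{L}\circ\mathrm{L}$, $\mathrm{L}\circ\mathrm{R}$, $\mathrm{R}\circ\mathrm{R}$) terms are absorbed, via the module form of the Hom-Jacobi identity, against the mixed terms in which the internal bracket $[x_i,x_j]$ lands in an outer slot (where multiplicativity gives $[\alpha^{n-1}(x_i),\alpha^{n-1}(x_j)]$), not against the overlapping $\mathrm{I}\circ\mathrm{I}$ terms as your third bullet suggests, but this is a bookkeeping detail that does not affect the viability of the approach.
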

\begin{proof}
The proof is obtained by straightforward computation. It is similar to proof  for  Hom-Lie algebras (see \cite{Ammar}).
\end{proof}
 The $n$-$th $ corresponding cohomology groups  are denoted by $H^{n}(\LX ,\M)=Z^{n}(\LX ,\M)/B^{n}(\LX ,\M)$, with $Z^{n}(\LX ,\M)$ being the $n$-cocycles and $B^{n}(\LX ,\M)$ the $n$-coboundaries.
 
 Similar complex for Hom-Lie algebras was defined independently in \cite{Ammar,Sheng}.
 
\section{ Global Deformations }
Let $(\LX ,[.,.],\alpha)$ be a Hom-Leibniz algebra over $\K$ and $\A$ be a commutative $\K$-algebra with a unit  $1$.
Let $\varepsilon: \A\rightarrow\K$ be a fixed  augmentation, that is an algebra homomorphism
with $\varepsilon(1)=1$. We   set  $\mathfrak{m}=ker(\varepsilon)$ and  assume that $dim({\mathfrak{m}^k}/{\mathfrak{m}^{k+1}})<\infty $ for all $k$, in order to avoid transfinite induction.
\begin{defn}
A global deformation $\lambda$ of $\LX
$ with base $(\A,\mathfrak{m})$, or simply with base $\A$, is a Hom-Leibniz
$\A$-algebra structure on the tensor product $\A\otimes_\K\LX
$ with the bracket $[.,.]_\lambda$
and a linear map  $id\otimes\alpha$  such that
$$\varepsilon\otimes id : \A\otimes\LX
\longrightarrow\K\otimes\LX
$$ is a Hom-Leibniz
$\A$-homomorphism.

In the sequel a global deformation is simply called deformation.

A deformation $\lambda$ is called \emph{infinitesimal} or of  first order (resp. \emph{order} $k$)  if in addition $\mathfrak{m}^2=0$ (resp. $\mathfrak{m}^{k+1}=0$).
\end{defn}

Observe that, by $\A$-linearity of $[.,.]_\lambda$,  we have for $l_1,l_2\in\LX
$ and $a,b\in \A$ 
$$[a\otimes l_1,b\otimes l_2]_\lambda=a b[1\otimes l_1,1\otimes l_2]_\lambda.$$
Thus, to define a deformation $\lambda$ it is enough to specify
the brackets $[1\otimes l_1,1\otimes l_2]_\lambda$ for $l_1,l_2\in\LX
$. Moreover, since
$\varepsilon\otimes id:\A\otimes\LX
\longrightarrow\K\otimes\LX
$ is a Hom-Leibniz $\A$-algebra homomorphism then 
$$(\varepsilon\otimes id)[1\otimes l_1,1\otimes l_2]_\lambda=[l_1,l_2]=(\varepsilon\otimes id)(1\otimes[l_1,l_2]).$$
Hence, we can write $$[1\otimes l_1,1\otimes l_2]_\lambda= 1\otimes [l_1,l_2]+\sum_{j}c_j\otimes y_j,$$
where $\sum_{j}c_j\otimes y_j$ is a finite sum with $c_j \in Ker(\varepsilon)=\mathfrak{m}$ and $y_j \in  \LX
.$
\begin{rem}
A deformation $\lambda$ of a Hom-Leibniz algebra $(\LX ,[.,.] , \alpha) $ with base $(\A,\mathfrak{m})$, is defined on the tensor product $\A\otimes_\K\LX
$ with a bracket  $[.,.]_\lambda$
  and a linear map  $id\otimes\alpha$  such that
\begin{enumerate}
\item for $l_1,l_2\in\LX
$ and $a,b\in \A$ we have  $[a\otimes l_1,b\otimes l_2]_\lambda=a b[1\otimes l_1,1\otimes l_2]_\lambda$,
\item the bracket $[.,.]_\lambda$ and the linear map $id\otimes\alpha$ satisfy the Hom-Jacobi identity \eqref{HomJacobi},
\item the augmentation $\varepsilon$ corresponding to   $\mathfrak{m}$ satisfies $(\varepsilon\otimes id)[1\otimes l_1,1\otimes l_2]_\lambda=1\otimes[l_1,l_2]=[l_1,l_2].$
\end{enumerate}
\end{rem}

A deformation with base $\A$ is called \emph{local} if the algebra $\A$ is local. The maximal ideal $\mathfrak{m}$  is unique in this case.  The algebra $\A$ is complete if $\A=\overleftarrow{lim}_{n\rightarrow \infty}({\A}/{\mathfrak{m}^n})$. Formal deformations are deformations with a complete local algebra as base.\\
A \emph{formal} deformation of a Hom-Leibniz algebra $\LX$, with base a complete local algebra $\A$, is  a Hom-Leibniz $\A$-algebra structure on the completed tensor product 
$\A\widehat{\otimes}\LX=\overleftarrow{lim}_{n\rightarrow \infty}({\A}/{\mathfrak{m}^n}\otimes \LX)$, which is a projective limit of deformations with base ${\A}/{\mathfrak{m}^n}$, such that $\varepsilon\widehat{\otimes}id:\A\widehat{\otimes}\LX\rightarrow \K\otimes\LX=\LX$ is a Hom-Leibniz algebras $\A$-homomorphism.

\begin{rem}
We recover  one-parameter formal deformation, introduced by Gerstenhaber \cite{Gerst def}, if $\A =\mathbb{K} [[t]]$.
\end{rem}

\begin{defn}
Suppose $\lambda_1\; \hbox{and}\; \lambda_2$ are two deformations of a Hom-Leibniz algebra
$\LX
$ with base $\A$. We call them \emph{equivalent} if there exists a Hom-Leibniz $\A$-algebra isomorphism
$$\phi:(\A\otimes \LX
,[.,.]_{\lambda_1},id\otimes\alpha) \longrightarrow(\A\otimes \LX
,[.,.]_{\lambda_2},id\otimes\alpha)$$
such that $(\varepsilon\otimes id)\circ \phi=\varepsilon\otimes id$.
\end{defn}
\begin{defn}
Suppose $\lambda$ is a given deformation of a Hom-Leibniz algebra $(\LX
,[.,.],\alpha)$ with base $(\A,\mathfrak{m})$ and augmentation
$\varepsilon: \A\longrightarrow \K$. Let $\A'$ be another commutative algebra with identity
and a fixed augmentation  $\varepsilon': \A'\longrightarrow\K$.
Suppose $\phi: \A\longrightarrow\A'$ is an algebra homomorphism with $\phi(1)=1$
and $\varepsilon'\circ\phi=\varepsilon$. 
Then the push-out $\phi_\ast\lambda$ is the deformation of $(\LX
,[.,.],\alpha)$ with base $(\A',\mathfrak{m}')$, where  $\mathfrak{m}'=Ker(\varepsilon')$,  and a bracket
$$[a_1'\otimes_{\A}(a_1\otimes l_1),a_2'\otimes_{\A}(a_2\otimes l_2)]_{\phi_\ast\lambda}=
a_1'a_2'\otimes_{\A}[a_1\otimes l_1,a_2\otimes l_2]_{\lambda} $$
where $a_1', a_2'\in \A', a_1, a_2\in \A$ and $l_1,l_2\in \LX
$. Here $\A'$ is considered as an
$\A'$-module by the map $a'\cdot a=a'\phi(a)$, so that
$$ \A'\otimes \LX
=(\A'\otimes_{\A}\A)\otimes \LX
=\A'\otimes_{\A}(\A\otimes\LX
).$$
\end{defn}
It is easy to see that $(\A'\otimes\LX
,[.,.]_{\phi_\ast\lambda},id\otimes\alpha)$ is a Hom-Leibniz algebra.
\begin{rem}
If the bracket $[.,.]_\lambda$ is given by
$$[1\otimes l_1,1\otimes l_2]_\lambda=1\otimes[l_1,l_2]+\sum_j c_j\otimes y_j\;
 \hbox{for}\; c_j\in \mathfrak{m }\;\hbox{and}\;y_j \in \LX,
$$
then the bracket $[.,.]_{\phi_\ast\lambda}$ can be written as
$$[1\otimes l_1,1\otimes l_2]_{\phi_\ast\lambda}=1\otimes [l_1,l_2]+\sum_j\phi(c_j)\otimes y_j.$$
\end{rem}
\section{Universal Infinitesimal Deformation}
In this section we construct a canonical infinitesimal deformation of a Hom-Leibniz algebra $\LX
$. It turns out that it is a universal infinitesimal deformation. We follow  to this end the procedure developed by Fialowski and her collaborators in different situations. We generalize  to Hom-Leibniz algebras, the classical result for Leibniz algebras obtained  in \cite{Fialowoski}.

Let $(\LX
,[.,.],\alpha)$ be a Hom-Leibniz algebra that satisfies the condition $ dim(H^2(\LX
,\LX
))< \infty$.
This is true for example, if $\LX
$ is finite-dimensional.
 Throughout this paper, we  denote the space $H^{2}(\LX
,\LX
)$ by $\mathbb{H}$ and its dual by $\mathbb{H}'$.
Consider the algebra $C_1=\K\oplus \mathbb{H}'$ by setting $$(k_1,h_1)\cdot (k_2,h_2)=(k_1k_2,k_1h_2+k_2h_1).$$ Observe that $\mathbb{H}'$ is an ideal of $C_1$ and $\mathbb{H}'^2=0$. Let $\mu $ be a map that takes a cohomology
class into a representative cocycle,
$$ \mu:\mathbb{H}\longrightarrow \mathcal{C}^2(\LX
,\LX
)=Hom(\LX
^2,\LX
).$$  Notice that there is an isomorphism
$\mathbb{H}'\otimes \LX
 \cong Hom(\mathbb{H},\LX
)$, so we have $$C_1\otimes\LX
 =\LX
\oplus Hom(\mathbb{H},\LX
).$$
Using the above identification, define a Hom-Leibniz algebra structure  on $C_1\otimes\LX
$ as follows.

  For
$(l_1,\phi_1),(l_2,\phi_2) \in \LX
\oplus Hom(\mathbb{H},\LX
)$,  let
$$\big[(l_1,\phi_1),(l_2,\phi_2)\big]=([l_1,l_2],\psi) $$
where the map $\psi:\mathbb{H}\longrightarrow\LX
$ is defined as 
$$\psi(f)=\mu(f)(l_1,l_2)+[\phi_1(f),l_2]+[l_1,\phi_2(f)]\;\hbox{for}\;f\in \mathbb{H}.$$
Define a  linear map $\alpha\otimes \beta$ on $C_1\otimes\LX$ such that 
\begin{eqnarray*}
\beta:& Hom(\mathbb{H},\LX)\longrightarrow Hom(\mathbb{H},\LX )\\
\ & \varphi\longrightarrow\beta(\varphi)
\end{eqnarray*}
where $\beta(\varphi): f\rightarrow\alpha(\varphi(f))$ for $f\in \mathbb{H}.$
\begin{prop}
The triple $(\LX
\oplus Hom(\mathbb{H},\LX
), [. , .], \alpha\otimes \beta)$ is a Hom-Leibniz algebra.
\end{prop}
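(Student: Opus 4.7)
The plan is to verify the Hom-Jacobi identity \eqref{HomJacobi} for $[.,.]$ and $\alpha\otimes\beta$ on $\LX\oplus\Hom(\mathbb{H},\LX)$ componentwise; the key ingredient beyond Hom-Jacobi on $\LX$ will be the cocycle condition satisfied by $\mu(f)$ for every $f\in\mathbb{H}$. For fixed elements $(l_i,\phi_i)$ with $i=1,2,3$, the $\LX$-components of the three double brackets entering \eqref{HomJacobi} are just $[\alpha(l_1),[l_2,l_3]]$, $[[l_1,l_2],\alpha(l_3)]$ and $[[l_1,l_3],\alpha(l_2)]$, so this summand collapses immediately to Hom-Jacobi on $(\LX,[.,.],\alpha)$.

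For the $\Hom(\mathbb{H},\LX)$-component I would expand the double brackets using the defining formula for $[(l,\phi),(l',\phi')]$ and the relation $\beta(\phi_i)(f)=\alpha(\phi_i(f))$, evaluate at $f\in\mathbb{H}$, and then sort the resulting terms by type: a $\mu(f)$-family (no $\phi$ present), together with three families respectively linear in $\phi_1(f)$, $\phi_2(f)$ and $\phi_3(f)$. Each $\phi_i$-family is a literal instance of Hom-Jacobi in $\LX$ applied to the triple obtained by replacing $l_i$ with $\phi_i(f)$, so all three of these families cancel on the nose.

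The main obstacle is the $\mu(f)$-family, since the left-hand side of \eqref{HomJacobi} produces the term $\mu(f)(\alpha(l_1),[l_2,l_3])$ which has no syntactic counterpart on the right. This is precisely where I would invoke the cocycle identity $\delta^2\mu(f)(l_1,l_2,l_3)=0$: it rewrites $\mu(f)(\alpha(l_1),[l_2,l_3])$ as an alternating combination of $\mu(f)([l_1,l_i],\alpha(l_j))$ and $[\mu(f)(l_i,l_j),\alpha(l_k)]$, together with a $[\alpha(l_1),\mu(f)(l_2,l_3)]$ contribution that is cancelled exactly by a matching term already present on the left. After this substitution the remaining $\mu$-terms line up with those generated on the right-hand side, finishing \eqref{HomJacobi}.

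Since the proposition is stated in the multiplicative framework fixed earlier in the paper, I would close by checking that $\alpha\otimes\beta$ is an algebra morphism for $[.,.]$. On the $\LX$-component this is just the multiplicativity of $\alpha$; on the $\Hom(\mathbb{H},\LX)$-component it reduces to the identity $\alpha(\mu(f)(l_1,l_2))=\mu(f)(\alpha(l_1),\alpha(l_2))$, which is exactly the cochain compatibility condition built into the definition of $\mathcal{C}^2(\LX,\LX)$, so no extra work is needed.
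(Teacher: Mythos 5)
Your proposal is correct and follows essentially the same route as the paper: the first coordinate reduces to Hom--Jacobi on $\LX$, and in the second coordinate the three $\phi_i$-families each vanish as literal instances of Hom--Jacobi (the paper marks these with single, double and triple underlines) while the remaining $\mu(f)$-terms assemble exactly into $\delta^2\mu(f)(l_1,l_2,l_3)=0$. Your closing check that $\alpha\otimes\beta$ is multiplicative, via $\alpha\circ\mu(f)=\mu(f)\circ(\alpha\times\alpha)$ from the definition of $\mathcal{C}^2(\LX,\LX)$, is a small addition the paper leaves implicit, and it is correct.
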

\begin{proof}
 Let $l_1,l_2,l_3\in \LX
 \;\hbox{and}\; \phi_1,\phi_2,\phi_3 \in Hom(\mathbb{H},\LX
)$
$$\big[(\alpha(l_1),\beta(\phi_1)),[(l_2,\phi_2),(l_3,\phi_3)]\big]-\big[[(l_1,\phi_1),(l_2,\phi_2)],
(\alpha(l_3),\beta(\phi_3))\big]$$ $$+\big[[(l_1,\phi_1),(l_3,\phi_3)],(\alpha(l_2),\beta(\phi_2))\big]$$
 $$=\big[(\alpha(l_1),\beta(\phi_1)),([l_2,l_3],\psi_{2,3})\big]- \big[([l_1,l_2],\psi_{1,2}),(\alpha(l_3),\beta(\phi_3))\big]$$ $$+\big[([l_1,l_3],\psi_{1,3}),(\alpha(l_2),\beta(\phi_2))\big]$$
 $$=\big(\big[\alpha(l_1),[l_2,l_3]\big],\psi_1 \big)-\big(\big[[l_1,l_2],\alpha(l_3)\big],\psi_2\big)
 +\big(\big[[l_1,l_3],\alpha(l_2)\big],\psi_3\big) $$
$$=\big(\big[\alpha(l_1),[l_2,l_3]\big]-\big[[l_1,l_2],\alpha(l_3)\big]+
 \big[[l_1,l_3],\alpha(l_2)\big],\psi_1-\psi_2+\psi_3\big).$$
The first coordinate is the left hand side of  Hom-Leibniz identity, which vanishes. Besides  the second coordinates vanishes as well since  $$\psi_1-\psi_2+\psi_3=\mu(f)(\alpha(l_1),[l_2,l_3])
-\mu(f)([l_1,l_2],\alpha(l_3))+\mu(f)([l_1,l_3],\alpha(l_2))$$
$$+ \underline{[\beta(\phi_1)f,[l_2,l_3]]}-\underline{\underline{\underline{[[l_1,l_2],\beta(\phi_3)f]}}}
+\underline{\underline{[[l_1,l_3],\beta(\phi_2)f]}}$$ $$+[\alpha(l_1),\mu(f)(l_2,l_3)]
+\underline{\underline{[\alpha(l_1),[\phi_2(f),l_3]}}
+\underline{\underline{\underline{[\alpha(l_1),[l_2,\phi_3(f)]}}}$$
$$+ [\mu(f)(l_1,l_3),\alpha(l_2)]+\underline{[[\phi_1(f),l_3],\alpha(l_2)]}
+\underline{\underline{\underline{[[l_1,\phi_3(f)],\alpha(l_2)]}}}$$
$$- [\mu(f)(l_1,l_2),\alpha(l_3)]-\underline{[[\phi_1(f),l_2],\alpha(l_3)]}
-\underline{\underline{[[l_1,\phi_2(f)],\alpha(l_3)]}}$$ $$=\delta^2\mu(f)=0.$$
\end{proof}
The triple $(\LX
\oplus Hom(\mathbb{H},\LX
), [. , .], \alpha\otimes \beta)$ defines an infinitesimal  deformation of a Hom-Leibniz algebra $\LX$ which we denote by $\eta_1.$

The main property of $\eta_1$ is that it is universal in the class of infinitesimal deformations.

 \begin{prop}
 Up to isomorphism, the deformation $\eta_1$ does not depend on the choice of $\mu$.
 \end{prop}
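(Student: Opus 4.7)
The plan is to exhibit an explicit Hom-Leibniz algebra isomorphism of $C_1\otimes\LX$ intertwining the two bracket structures obtained from distinct choices $\mu$ and $\mu'$ of the section $\mathbb{H}\to \mathcal{C}^2(\LX,\LX)$. The starting observation is that $\mu(f)$ and $\mu'(f)$ represent the same class $f\in \mathbb{H}=H^2(\LX,\LX)$, so their difference is a $2$-coboundary: $\mu(f)-\mu'(f)=\delta^1\tau(f)$ for some $\tau(f)\in\mathcal{C}^1(\LX,\LX)$. First I would argue that $\tau$ can be chosen as a \emph{linear} map $\mathbb{H}\to \mathcal{C}^1(\LX,\LX)$: since $\mathbb{H}$ is finite-dimensional and $\delta^1$ surjects onto the space of coboundaries, one picks a basis of $\mathbb{H}$, lifts each basis vector arbitrarily to a $1$-cochain, and extends by linearity. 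Being a $1$-cochain in the sense of Section~1, each $\tau(f)$ automatically satisfies the intertwining condition $\alpha\circ\tau(f)=\tau(f)\circ\alpha$.

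Next, using the canonical identification $\mathbb{H}'\otimes\LX\cong \Hom(\mathbb{H},\LX)$, I would define
$$
\phi:\LX\oplus \Hom(\mathbb{H},\LX)\longrightarrow \LX\oplus \Hom(\mathbb{H},\LX), \qquad \phi(l,\varphi)=\bigl(l,\ \varphi+T_l\bigr),
$$
where $T_l\in \Hom(\mathbb{H},\LX)$ is given by $T_l(f):=\tau(f)(l)$. By construction $\phi$ is $\K$-linear, bijective with inverse $(l,\varphi)\mapsto(l,\varphi-T_l)$, and acts as the identity on the $\LX$-summand, so that $(\varepsilon\otimes \id)\circ\phi=\varepsilon\otimes \id$ holds automatically. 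Its commutation with $\alpha\otimes\beta$ reduces to $\beta(T_l)=T_{\alpha(l)}$, which is precisely the intertwining property of $\tau(f)$ noted above.

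The core verification is bracket preservation. Expanding $\phi\bigl([(l_1,\varphi_1),(l_2,\varphi_2)]_\mu\bigr)$ and comparing with $[\phi(l_1,\varphi_1),\phi(l_2,\varphi_2)]_{\mu'}$, the first coordinates agree trivially. In the second coordinates, the $\varphi_i$-dependent cross-terms cancel, and the equality reduces to
$$
\bigl(\mu(f)-\mu'(f)\bigr)(l_1,l_2)=[\tau(f)(l_1),l_2]+[l_1,\tau(f)(l_2)]-\tau(f)([l_1,l_2]),
$$
whose right-hand side is precisely $\delta^1\tau(f)(l_1,l_2)$ in the explicit form derived from the coboundary formula of Section~1, so the identity holds by the defining relation for $\tau$.

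The main (and essentially only) obstacle is the bookkeeping in the bracket comparison: one must track carefully the correction terms $T_{l_1}$ and $T_{l_2}$ inserted by $\phi$ and see that they reproduce exactly the Hom-Leibniz $1$-coboundary $\delta^1\tau(f)$ that measures the discrepancy $\mu(f)-\mu'(f)$. Once this identification is made the proposition is immediate, and the isomorphism $\phi$ fits the equivalence notion of the preceding section, showing that $\eta_1$ is well-defined up to isomorphism.
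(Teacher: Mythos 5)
Your proposal is correct and follows essentially the same route as the paper: the paper's map $\rho(l,\phi)=(l,\psi)$ with $\psi(h)=f(h)l+\phi(h)$ is exactly your $\phi(l,\varphi)=(l,\varphi+T_l)$, and the bracket comparison reduces in both cases to $\mu(h)-\mu'(h)=\delta^1\tau(h)$. Your additional remarks on the linearity of the lift $\tau$ and the compatibility with $\alpha\otimes\beta$ only make explicit points the paper leaves implicit.
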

 \begin{proof}
 Let $\mu'$ be another choice for $\mu$, $$\mu':H\longrightarrow \mathcal{C} ^2(\LX,\LX) .$$
 Then for $h\in \mathbb{H},\mu(h)\;\hbox{and}\;\mu'(h)$ represent the same class.
 We can define a homomorphism $$f:\mathbb{H}\longrightarrow  \mathcal{C} ^1(\LX,\LX) $$
 by $f(h_i)=h_i$, where $\{h_i\}_i$ is a basis of $\mathbb{H}$, with $\delta f(h_i)=\mu(h_i)-\mu'(h_i)$.
\\    By the identification $C_1\otimes \LX
\cong\LX
\oplus Hom(\mathbb{H},\LX
)$,
$$ \rho :\LX
\oplus Hom(\mathbb{H},\LX
)\longrightarrow\LX
\oplus Hom(\mathbb{H},\LX
)\;\hbox{by} \; \rho(l,\phi)=(l,\psi)$$
where $\psi(h)=f(h)l+\phi(h),l\in \l,\phi\in Hom(\mathbb{H},\LX
)$, we have 
\begin{itemize}
  \item $\rho$ is $C_1$-(linear) automorphism of $C_1\otimes\LX
$ with $\rho^{-1}(l,\psi)=(l,\phi)$
where $\phi(h)=\psi(h)-f(h)l$

  \item $\rho$ preserves the bracket. Indeed,
  let $(l_1,\phi_1)$ and $(l_2,\phi_2)\in C_1\otimes\LX
$ with $\rho(l_i,\phi_i)=(l_i,\psi_i); i=1,2$

$[(l_1,\phi_1),(l_2,\phi_2)]=([l_1,l_2],\phi_3)$ where $\phi_3(h)=\mu(h)(l_1,l_2)+[\phi_1(h),l_2]+[l_1\phi_2(h)]$

 and $[(l_1,\psi_1),(l_2,\psi_2)]=([l_1,l_2],\psi_3)$ where 
 \begin{eqnarray*}
 \psi_3(h)&=&\mu(h)(l_1,l_2)+[\psi_1(h),l_2]+[l_1\psi_2(h)]\\
  &=& \mu(h)(l_1,l_2)-\delta f(h)(l_1,l_2)+[\phi_1(h)+f(h)l_1,l_2]+[l_1, \phi_2(h)+f(h)l_2]\\
&=&\mu(h)([l_1,l_2])-\underline{[l_1,f(h)l_2]}-\underline{\underline{[f(h)l_1,l_2]}}+f(h)(l_1,l_2)+\\ & &[\phi_1(h),l_2]+\underline{\underline{[f(h)l_1,l_2]}}+   [l_1,\phi_2(h)]+\underline{[l_1,f(h)l_2]}\\
& =&\mu(h)([l_1,l_2])+f(h)(l_1,l_2)+[\phi_1(h),l_2]+   [l_1,\phi_2(h)]\\
&=& \phi_3(f)+f(h)([l_1,l_2]). \end{eqnarray*}
\end{itemize}
Hence $$\rho[(l_1,\phi_1),(l_2,\phi_2)]=[\rho(l_1,\phi_1),\rho(l_2,\phi_2)].$$
Therefore, up to an isomorphism, the infinitesimal deformation obtained is independent
of the choice of $\mu$.
\end{proof}
 \begin{rem}
We have $dim(\mathbb{H})<+\infty$. Suppose that $\{h_i\}_{1\leq i\leq r}$ is a basis of $\mathbb{H}$
and $\{g_i\}_{1\leq i\leq r}$ is the dual basis. Let $\mu(h_i)=\mu_i\in  \mathcal{C} ^2(\LX
,\LX
)$. By identification
$C_1\otimes\LX
 =\LX
\oplus Hom(\mathbb{H},\LX
)$, an element $(l,\varphi)\in  \LX
\oplus\Hom(\mathbb{H},\LX
)$
corresponds to $1\otimes l+\sum_{i=1}^rg_i\otimes \varphi(h_i)$. In particular,
  $g\otimes l=\sum_{i=1}^rg_i\otimes g(h_i)l\in C_1\otimes\LX
$ corresponds to $(0,\varphi)$. For
 $f\in \mathbb{H}$ we have $\varphi(f)=g(f)l$.
 Then for $(l_1,\varphi_1),(l_2,\varphi_2) \in \LX
\oplus\Hom(\mathbb{H},\LX
)$
their bracket $([l_1,l_2],\psi)$ corresponds to
$$1\otimes[l_1,l_2]+\sum_{i=1}^rg_i\otimes(\mu_i(l_1,l_2)+[\varphi_1(h_i),l_2]+[l_1,\varphi_2(h_i)]).$$
In particular, for $l_1,l_2\in \LX
$ we have
$$[1\otimes l_1,1\otimes l_2]_{\eta_{1}}= 1\otimes [l_1,l_2]+\sum_{i=1}^rg_i\otimes \mu_{i}(l_1,l_2).$$
Hence $(\LX
\oplus Hom(\mathbb{H},\LX
),[.,.],\alpha\otimes\beta)$ is a Hom-Leibniz algebra, is equivalent to
$(C_1\otimes \LX
,[.,.]_{\eta_{1}},id\otimes\alpha)$ is a Hom-Leibniz algebra too.
\end{rem}
\begin{prop}
For any infinitesimal deformation $\lambda$ of a Hom-Leibniz algebra $(\LX
,[.,.],\alpha)$ with
a finite dimensional base $\A$,  there exists a unique homomorphism $\phi=id+a_\lambda :C_1=\K\oplus \mathbb{H}'\longrightarrow\A$
such that $\lambda$ is equivalent to the push-out $\phi_\ast \eta_1$.
\end{prop}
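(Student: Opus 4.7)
The plan is to extract from $\lambda$ a $2$-cocycle class, use it to define $\phi$, and then construct an explicit equivalence $\lambda\simeq\phi_\ast\eta_1$.

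First I would write
\[
[1\otimes l_1,1\otimes l_2]_\lambda=1\otimes[l_1,l_2]+\omega_\lambda(l_1,l_2),\qquad \omega_\lambda(l_1,l_2)\in\mathfrak{m}\otimes\LX.
\]
Since $\mathfrak{m}^2=0$, the subspace $\mathfrak{m}\otimes\LX$ is an abelian ideal on which $\A\otimes\LX$ acts through its augmentation, hence is naturally a representation of $(\LX,[.,.],\alpha)$ with $\gamma=id\otimes\alpha$. Writing the Hom-Jacobi identity for $[.,.]_\lambda$ at $(1\otimes x_1,1\otimes x_2,1\otimes x_3)$ and collecting the terms that land in $\mathfrak{m}\otimes\LX$ yields $\delta^2\omega_\lambda=0$, while compatibility of $[.,.]_\lambda$ with $id\otimes\alpha$ supplies the cochain condition $\gamma\circ\omega_\lambda=\omega_\lambda\circ(\alpha\times\alpha)$. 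Using $\dim\A<\infty$ and $\dim\mathbb{H}<\infty$, the class $[\omega_\lambda]\in\mathfrak{m}\otimes\mathbb{H}$ is identified, via the canonical isomorphism $\mathfrak{m}\otimes\mathbb{H}\cong\Hom(\mathbb{H}',\mathfrak{m})$, with a linear map $a_\lambda:\mathbb{H}'\to\mathfrak{m}$, and I set $\phi(k,h)=k+a_\lambda(h)$. The cross term $a_\lambda(h_1)a_\lambda(h_2)$ lies in $\mathfrak{m}^2=0$, so $\phi$ is an algebra homomorphism with $\phi(1)=1$ and $\varepsilon\circ\phi$ equal to the augmentation of $C_1$.

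For the equivalence, the remark preceding the proposition gives
\[
[1\otimes l_1,1\otimes l_2]_{\eta_1}=1\otimes[l_1,l_2]+\sum_i g_i\otimes\mu_i(l_1,l_2),
\]
whence
\[
[1\otimes l_1,1\otimes l_2]_{\phi_\ast\eta_1}=1\otimes[l_1,l_2]+\sum_i a_\lambda(g_i)\otimes\mu_i(l_1,l_2).
\]
By construction $\sum_i a_\lambda(g_i)\otimes\mu_i$ represents the same class as $\omega_\lambda$, so one can choose $\xi\in\mathfrak{m}\otimes\mathcal{C}^1(\LX,\LX)$ with $\omega_\lambda-\sum_i a_\lambda(g_i)\otimes\mu_i=\delta^1\xi$. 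The $\A$-linear map $\Phi=id+\xi$ on $\A\otimes\LX$ is an automorphism (with inverse $id-\xi$, again by $\mathfrak{m}^2=0$); it satisfies $(\varepsilon\otimes id)\circ\Phi=\varepsilon\otimes id$ because $\xi$ has image in $\mathfrak{m}\otimes\LX$; it commutes with $id\otimes\alpha$ because each component of $\xi=\sum_s m_s\otimes\xi_s$ satisfies the cochain condition $\alpha\circ\xi_s=\xi_s\circ\alpha$; and a direct computation, using $\mathfrak{m}^2=0$ and the explicit formula for $\delta^1\xi$, shows $\Phi\circ[\cdot,\cdot]_{\phi_\ast\eta_1}=[\Phi\cdot,\Phi\cdot]_\lambda$.

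Uniqueness follows because any $\phi'=id+a'$ with the same property would make $\sum_i a'(g_i)\otimes\mu_i$ cohomologous to $\omega_\lambda$; linear independence of the classes $h_i=[\mu_i]$ in $\mathbb{H}$ then forces $a'(g_i)=a_\lambda(g_i)$ for every $i$, hence $a'=a_\lambda$. The main obstacle is the equivalence step: one must exhibit $\Phi$ and check at once that it transports the brackets, preserves the augmentation, and commutes with the Hom-structure $id\otimes\alpha$. The last requirement, tied to the cochain constraint $\alpha\circ\xi_s=\xi_s\circ\alpha$, is precisely what makes the Hom-version go through as the classical Leibniz argument of \cite{Fialowoski} does.
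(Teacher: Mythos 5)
Your argument is correct and takes essentially the same route as the paper: your cocycle $\omega_\lambda$ and the verification $\delta^2\omega_\lambda=0$ are precisely the content of the paper's Lemma on $\psi_{\lambda,\xi}$, and the rest (defining $a_\lambda$ from the class of $\omega_\lambda$, building the equivalence $id+\xi$ from a cobounding $1$-cochain, and deducing uniqueness from the linear independence of the classes $h_i$) is the standard Leibniz-algebra argument of Fialowski--Mandal--Mukherjee to which the paper explicitly defers. The only genuinely Hom-specific point, namely that the cobounding cochain can be chosen $\alpha$-compatible so that $id+\xi$ commutes with $id\otimes\alpha$, is correctly identified and handled in your write-up.
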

\begin{lem}\label{lem}
Let $\lambda$  be an infinitesimal deformation of the Hom-Leibniz algebra $(\LX
,[.,.],\alpha)$  with a finite dimensional base $\A$.
Let $m_{1\leq i\leq r}$ be a basis of $\mathfrak{m}=ker(\varepsilon)$ and $\{\xi_i\}_{1\leq i\leq r}$ be the dual basis.
Note that any element $\xi$ of $\mathfrak{m}'$  can be viewed as an element in the dual space $\A'$ with $\xi(1)=0$.
Set, for any $\xi$, $$\psi_{\lambda,\xi}(l_1,l_2)=\xi\otimes id([1\otimes l_1,1\otimes l_2]_\lambda)\;\hbox{for}\; l_1,l_2\in \LX
.$$
Then, $\psi_{\lambda,\xi}$ is a 2-cocycle.
\end{lem}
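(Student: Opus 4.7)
The plan is to expand the Hom-Jacobi identity satisfied by the deformed bracket $[\cdot,\cdot]_\lambda$ into its ``classical'' and ``first order'' parts, then to recognize the first order part, after projecting through $\xi\otimes id$, as exactly the 2-cocycle identity $\delta^2\psi_{\lambda,\xi}=0$.

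First I would write each deformed bracket in the canonical form $[1\otimes l_1,1\otimes l_2]_\lambda = 1\otimes[l_1,l_2] + \psi'(l_1,l_2)$ with $\psi'(l_1,l_2)\in\mathfrak{m}\otimes\LX$; this splitting is unique because $\A=\K\cdot 1\oplus\mathfrak{m}$, and the $\K$-component is forced to be $1\otimes[l_1,l_2]$ by the requirement that $\varepsilon\otimes id$ be a Hom-Leibniz $\A$-homomorphism. Since $\xi$ annihilates the unit, one simply has $\psi_{\lambda,\xi}=(\xi\otimes id)\circ\psi'$. The cochain compatibility $\alpha\circ\psi_{\lambda,\xi}=\psi_{\lambda,\xi}\circ(\alpha\times\alpha)$ then drops out of the multiplicativity of $id\otimes\alpha$ for $[\cdot,\cdot]_\lambda$ after projection by $\xi\otimes id$.

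Next I would substitute this decomposition into each of the three outer brackets appearing in the Hom-Jacobi identity for $[\cdot,\cdot]_\lambda$ applied to $(1\otimes l_1,1\otimes l_2,1\otimes l_3)$. Each outer bracket splits into a classical piece in $1\otimes\LX$ together with two first order pieces in $\mathfrak{m}\otimes\LX$, one carrying $\psi'$ on the inner bracket and one carrying $\psi'$ on the outer argument; the would-be term with two $\psi'$ factors lives in $\mathfrak{m}^2\otimes\LX$ and vanishes by the infinitesimal hypothesis. The three classical pieces reassemble into the Hom-Jacobi identity of $\LX$ itself and cancel, leaving a pure first order identity in $\mathfrak{m}\otimes\LX$ built from six terms. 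Applying $\xi\otimes id$, and using $\mathfrak{m}^2=0$ once more to compute, for instance, $(\xi\otimes id)[1\otimes l,\sum_j c_j\otimes y_j]_\lambda = \sum_j\xi(c_j)[l,y_j]$ (and the analogous formula on the right), each of the six surviving terms turns into one summand of $\delta^2\psi_{\lambda,\xi}(l_1,l_2,l_3)$ as defined in Section~1.

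The main obstacle is combinatorial bookkeeping: matching the six mixed first order terms with the six summands of $\delta^2$ with signs included. Because the Hom-Leibniz Jacobi identity is not antisymmetric in its two outer slots, one must keep careful track of which argument carries $\alpha$ and which sits inside $\psi'$ in each term; arranging the outcome in the order prescribed by the coboundary formula then completes the identification and yields $\delta^2\psi_{\lambda,\xi}=0$.
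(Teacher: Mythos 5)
Your proposal is correct and follows essentially the same route as the paper: expand the Hom-Jacobi identity for $[\cdot,\cdot]_\lambda$, discard the $\mathfrak{m}^2$-terms using the infinitesimal hypothesis, apply $\xi\otimes id$, and match the six surviving first-order terms with the six summands of $\delta^2\psi_{\lambda,\xi}$. The only (welcome) addition is your explicit check that $\psi_{\lambda,\xi}$ satisfies the $\alpha$-equivariance required of a cochain in $\mathcal{C}^2(\LX,\LX)$, a point the paper's proof passes over in silence.
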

\begin{proof}
If we set $\psi_i=\psi_{\lambda,\xi_i} \;\hbox{for}\;1\leq i\leq r$,  the Hom-Leibniz bracket over $\A\otimes\LX
$
takes the form
 $$ [1\otimes l_1,1\otimes l_2]_\lambda=1\otimes[l_1,l_2]+\sum_{i=1}^r m_i\otimes x_i=1\otimes[l_1,l_2]+\sum_{i=1}^r m_i\otimes\psi_i(l_1,l_2).$$
 We have
\begin{eqnarray*}
\delta^2\psi_{\lambda,\xi}(l_1,l_2,l_3)&=&\big[\alpha(l_1),\psi_{\lambda,\xi}(l_2,l_3)\big]+\big[\psi_{\lambda,\xi}(l_1,l_3),\alpha(l_2)\big]-\big[\psi_{\lambda,\xi}(l_1,l_2),\alpha(l_3)\big]\\
&& -\psi_{\lambda,\xi}([l_1,l_2],\alpha(l_3))+\psi_{\lambda,\xi}([l_1,l_3],\alpha(l_2))+\psi_{\lambda,\xi}(\alpha(l_1),[l_2,l_3]).
\end{eqnarray*}
Observe that 
\begin{eqnarray*}
&& (\xi\otimes id)(\big[1\otimes \alpha(l_1),[1\otimes l_2,1\otimes l_3]\big]_\lambda
\\ &&=(\xi\otimes id)(\big[1\otimes \alpha(l_1),1\otimes [l_2,l_3]\big]_\lambda+\big[1\otimes\alpha(l_1),\sum_{j=1}^r m_j\otimes x_j\big]_\lambda\\ &&=\psi_{\lambda,\xi}(\alpha(l_1),[l_2,l_3])+\sum_{j=1}^r(\xi\otimes id)[1\otimes \alpha(l_1),m_j\otimes x_j]_\lambda.
\end{eqnarray*}
Moreover, 
\begin{eqnarray*}
(\xi\otimes id)[1\otimes l_i,1\otimes l_j]_\lambda&=&(\xi\otimes id)m_j[1\otimes l_i,1\otimes l_j]_\lambda\\ &=& (\xi\otimes id)m_j(1\otimes [l_1,x_j]+\sum_{i=1}^rm_i\otimes x_{ji})\\
& =&(\xi\otimes id)m_j(1\otimes [l_1,x_j]) (m^2=0)\\
&=&[l_1,(\xi\otimes id)(m_j\otimes x_j)] .
\end{eqnarray*}
Therefore 
\begin{eqnarray*}(\xi\otimes id)(\big[1\otimes \alpha(l_1),[1\otimes l_2,1\otimes l_3]_\lambda\big]_\lambda 
=\psi_{\lambda,\xi}(\alpha(l_1),[l_2,l_3])+[l_1,(\xi\otimes id)\sum_{j=1}^r m_j\otimes x_j]
\\=\psi_{\lambda,\xi}(\alpha(l_1),[l_2,l_3])+\big[l_1,(\xi\otimes id)\big([1\otimes l_2,1\otimes l_3]_\lambda-1\otimes [l_2,l_3]\big)\big] \\=\psi_{\lambda,\xi}(\alpha(l_1),[l_2,l_3])+\big[l_1,\psi_{\lambda,\xi}(l_2,l_3)].
\end{eqnarray*}
Similarly $$(\xi\otimes id)(\big[[1\otimes l_1,1\otimes l_2]_\lambda,1\otimes \alpha(l_3)\big]_\lambda)=\psi_{\lambda,\xi}([1\otimes l_1,1\otimes l_2],\alpha(l_3))+[\psi_{\lambda,\xi}(l_1,l_2),l_3],$$
$$(\xi\otimes id)(\big[[1\otimes l_1,1\otimes l_3]_\lambda,1\otimes \alpha(l_2)\big]_\lambda)=\psi_{\lambda,\xi}([1\otimes l_1,1\otimes l_3],\alpha(l_2))+[\psi_{\lambda,\xi}(l_1,l_3),l_2]. $$
It follows that $$\delta^2 \psi_{\lambda,\xi}(l_1,l_2,l_3)=(\xi\otimes id)\big(\big[[1\otimes l_1,1\otimes l_3]_\lambda,1\otimes \alpha(l_2)\big]_\lambda
-\big[[1\otimes l_1,1\otimes l_2]_\lambda,1\otimes \alpha(l_3)\big]_\lambda$$
$$+\big[1\otimes \alpha(l_1),[1\otimes l_2,1\otimes l_3]_\lambda\big]_\lambda\big)=0\;\hbox{(by the Hom-Leibniz identity)}.$$
\end{proof}

The proof of the proposition is the same  that for  Leibniz algebras, see \cite{Fialowoski} .

\section{Obstructions}
The aim of this section is to study obstructions in extending deformations.
For this end, we need the interpretation of the $1$- and $2$-dimensional Harrison cohomology of a commutative
algebra. For the definition and connections between Harrison and Hochschild cohomologies, see \cite{Harrison}.
\begin{defn}
For an $\A$-module $M$, we set $$H^q_{Harr}(\A,\M)=H^q(Ch(\A),\M).$$

\end{defn}
\begin{prop}
Let $\A$ be a local commutative $\K$-algebra with the maximal ideal $\mathfrak{m}$
 and  $\M$ be an $\A$-module with $ \mathfrak{m}\M=0$. Then we have the canonical isomorphisms
 $$ H_{Harr}^q(\A,\M)=H_{Harr}^q(\A,\K)\otimes\K.$$
\end{prop}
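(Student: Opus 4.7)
The plan is to exploit the fact that the hypothesis $\mathfrak{m}\M=0$ forces the $\A$-module structure on $\M$ to factor through the quotient $\A\twoheadrightarrow \A/\mathfrak{m}\cong\K$, and then to produce an isomorphism of Harrison cochain complexes
\[
Ch^{\star}(\A,\M)\;\cong\;Ch^{\star}(\A,\K)\otimes_{\K}\M
\]
(the displayed formula in the statement should read $H_{Harr}^{q}(\A,\K)\otimes_{\K}\M$). Since $\K$ is a field, tensoring with $\M$ is exact, so this will directly give the claim on the level of cohomology.

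First I would recall that $Ch^{q}(\A,\M)$ is, by definition, the quotient of $\Hom_{\K}(\A^{\otimes q},\M)$ by shuffle relations, equipped with the restriction of the Hochschild coboundary. In that coboundary, the only terms involving the $\A$-action on the coefficient module $\M$ are the first and last, of the form $a_{0}\cdot\varphi(a_{1},\ldots,a_{q})$ and $\varphi(a_{1},\ldots,a_{q})\cdot a_{q+1}$. Because $\mathfrak{m}\M=0$, these two terms depend on $a_{0}$ and $a_{q+1}$ only through their augmentations $\varepsilon(a_{0}),\varepsilon(a_{q+1})\in\K$; in particular they compute exactly the same thing whether one uses the $\A$-module $\M$ or the $\A$-module $\K$ acting on a chosen vector of $\M$. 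All the other (inner) terms of the coboundary do not touch $\M$ at all.

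Using this, I would define the isomorphism of complexes explicitly. Fixing a $\K$-basis (or, for the purely functorial argument, using the canonical map), I identify
\[
\Hom_{\K}(\A^{\otimes q},\M)\;\cong\;\Hom_{\K}(\A^{\otimes q},\K)\otimes_{\K}\M,
\]
and check that this identification descends to the shuffle quotients on both sides (the shuffle relations live in the $\A^{\otimes q}$ factor and hence are unaffected by tensoring with $\M$). The observation of the previous paragraph then shows that the Harrison differential on the left-hand side corresponds, under this identification, to $\delta_{\A,\K}\otimes \id_{\M}$ on the right-hand side. Thus $Ch^{\star}(\A,\M)\cong Ch^{\star}(\A,\K)\otimes_{\K}\M$ as cochain complexes.

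Finally, since $\K$ is a field, the functor $-\otimes_{\K}\M$ is exact and in particular commutes with the formation of cohomology, yielding the canonical isomorphism
\[
H_{Harr}^{q}(\A,\M)\;\cong\;H_{Harr}^{q}(\A,\K)\otimes_{\K}\M.
\]
The main obstacle I would expect is the bookkeeping around the shuffle quotient: one must verify that the decomposition $Ch^{q}(\A,\M)\cong Ch^{q}(\A,\K)\otimes_{\K}\M$ is compatible with the shuffle relations and with the differential simultaneously, and that the identification is canonical (independent of choices of bases) so that the resulting isomorphism on cohomology is canonical, as asserted. Everything else reduces to the elementary remark that $\mathfrak{m}\M=0$ kills all ``new'' contributions to the coboundary beyond those already present in $Ch^{\star}(\A,\K)$.
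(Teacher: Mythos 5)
The paper offers no proof of this proposition: it is quoted as a known fact about Harrison cohomology (the reader is referred to Barr's article for the background), so there is no in-paper argument to compare yours against. Your route is the standard one and is essentially correct, including your observation that the right-hand side must read $H_{Harr}^{q}(\A,\K)\otimes_{\K}\M$ rather than $\otimes\,\K$; as printed the statement is a typo. Two points deserve care. First, with the paper's convention $H^q_{Harr}(\A,\M)=H^q(Ch(\A),\M)$ the Harrison cochain complex is $\Hom_{\K}(Ch_q(\A),\M)$, i.e.\ the cochains \emph{vanishing on} shuffles (a subcomplex of the Hochschild complex), not a quotient of $\Hom_{\K}(\A^{\otimes q},\M)$ by shuffle relations; this does not change your argument, since either way the shuffle data live entirely in the $\A^{\otimes q}$ slot. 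Second, and more substantively, the natural map $\Hom_{\K}(V,\K)\otimes_{\K}\M\to\Hom_{\K}(V,\M)$ is an isomorphism only when $V$ or $\M$ is finite dimensional; in full generality your identification of cochain complexes is only an injection, and the conclusion can fail if both the Harrison homology of $\A$ and $\M$ are infinite dimensional. You should therefore either invoke the finite dimensionality of the local algebras $C_k$ (or of $\M$) that is in force everywhere this proposition is used in the paper, or replace the tensor-product step by the universal-coefficient identification $H^q(\Hom(C_\ast,\M))\cong\Hom(H_q(C_\ast),\M)$ over the field $\K$ and then compare it with $\Hom(H_q(C_\ast),\K)\otimes_{\K}\M$ under the same finiteness hypothesis. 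With that hypothesis made explicit, the remaining steps --- the module action on $\M$ factoring through the augmentation $\varepsilon$ because $\mathfrak{m}\M=0$, the exactness of $-\otimes_{\K}\M$ over a field, and the canonicity of the identification --- are all correct.
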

\begin{defn}
An extension $\B$ of an algebra $\A$ by an $\A$-module $\M$ is a $\K$-algebra $\B$ together
with an exact sequence of $\K$-modules  $$\begin{array}{ccc}
0\longrightarrow\M& \stackrel{i}{\longrightarrow}\B& \stackrel{p}{\longrightarrow}\A\longrightarrow 0 \end{array}$$
where $p$ is a $\K$-algebra homomorphism, and the $\B$-module structure  on $i(\M)$ is given by the $\A$-module structure of $\M$
by
 $$ i(m)\cdot b=i(m p(b)).$$
\end{defn}
\begin{prop}\  \begin{enumerate}
              \item  The space  $ H_{Harr}^1(\A,\M)$ is isomorphic  to the space of derivations $f:\A\rightarrow\M$.
              \item  Elements of $H_{Harr}^2(\A,\M)$ correspond bijectively to isomorphism classes of extensions
               $$\begin{array}{ccc}
0\longrightarrow\M& \stackrel{i}{\longrightarrow}\B& \stackrel{p}{\longrightarrow}\A\longrightarrow 0 \end{array}$$
 of the algebra $\A$ by means of $\M$.
              \item   The space  $ H_{Harr}^1(\A,\M)$ can be interpreted as the group
               of automorphisms of any given extensions of $\A$ by $\M$.
            \end{enumerate}
\end{prop}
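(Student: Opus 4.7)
The plan is to attack the three parts of the proposition in order, using only the standard description of the Harrison complex as the subcomplex (or, in characteristic zero, direct summand) of the Hochschild complex obtained by killing shuffles; concretely, this makes a Harrison $2$-cochain a symmetric bilinear map $\A\times\A\to\M$.

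For (1), I would compute the Harrison differential in the lowest degrees directly. A $0$-cochain is an element $m\in \M$, whose coboundary sends $a$ to $a\cdot m-m\cdot a$; since $\A$ is commutative and the two $\A$-actions on $\M$ coincide, this vanishes identically, so $B^1=0$. In degree one, Harrison and Hochschild cochains coincide, and the cocycle equation $\delta f(a,b)=af(b)-f(ab)+f(a)b=0$ is exactly the Leibniz rule. Hence $H^1_{Harr}(\A,\M)=Z^1=\mathrm{Der}(\A,\M)$.

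For (2), I would set up the standard factor-set construction in both directions. Starting from an extension $0\to \M\stackrel{i}{\to}\B\stackrel{p}{\to}\A\to 0$, choose a $\K$-linear section $\sigma\colon\A\to\B$ of $p$ with $\sigma(1)=1$ and define $g_\sigma(a,b)=\sigma(a)\sigma(b)-\sigma(ab)$, which lands in $\ker p = i(\M)$. Commutativity of $\B$ makes $g_\sigma$ symmetric, hence a legitimate Harrison $2$-cochain, and the defining relation $i(m)\cdot b=i(m\,p(b))$ forces $i(\M)^2=0$, which reduces the associativity identity in $\B$ to the Harrison $2$-cocycle equation for $g_\sigma$ with values in the $\A$-module $\M$. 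A different section $\sigma'$ differs from $\sigma$ by a $\K$-linear map $h\colon\A\to\M$, and a direct computation gives $g_{\sigma'}-g_\sigma=\delta h$, so $[g_\sigma]$ is a well-defined isomorphism invariant. Conversely, given a Harrison $2$-cocycle $g$, endow $\A\oplus\M$ with the product $(a,m)(b,n)=(ab,\,an+mb+g(a,b))$: symmetry of $g$ gives commutativity, the cocycle identity gives associativity, and this defines an extension whose invariant recovers $[g]$. The two constructions are mutually inverse on isomorphism classes.

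For (3), I would describe the group of automorphisms $\varphi\colon\B\to\B$ of a fixed extension (i.e.\ $p\varphi=p$ and $\varphi i=i$). Any such $\varphi$ has the form $\varphi(b)=b+i\bigl(f(p(b))\bigr)$ for a unique $\K$-linear map $f\colon\A\to\M$, and the requirement that $\varphi$ respect multiplication, together with $i(\M)^2=0$, unfolds to the Leibniz identity $f(ab)=af(b)+f(a)b$. Composition of two such automorphisms corresponds to the sum $f+f'$ (the quadratic correction vanishes because $i(\M)^2=0$), so the automorphism group is $\mathrm{Der}(\A,\M)$, which by (1) is $H^1_{Harr}(\A,\M)$. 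The main obstacle I anticipate is the bookkeeping in (2): one must check simultaneously that the symmetry of $g_\sigma$ matches the Harrison (shuffle) condition in degree two rather than only the Hochschild one, that the associativity of $\B$ yields precisely the Harrison $2$-cocycle identity, and that the change-of-section correction is a Harrison coboundary. Once this symmetry-versus-associativity dictionary is in hand, (1) and (3) reduce to essentially the same Leibniz-rule computation carried out on different cochains.
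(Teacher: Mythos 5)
Your proposal is correct, but there is nothing in the paper to compare it against: the authors state this proposition without proof, recalling it as classical material and referring the reader to Barr's paper on Harrison and Hochschild cohomology cited just above. Your argument is the standard factor-set proof, and the details check out: in degree $0$ the coboundary $a\mapsto a\cdot m-m\cdot a$ vanishes because the module is symmetric, so $H^1_{Harr}=Z^1=\mathrm{Der}(\A,\M)$; the relation $i(m)\cdot b=i(m\,p(b))$ together with $p\circ i=0$ does give $i(\M)^2=0$, which is what collapses associativity of $\B$ to the $2$-cocycle identity for $g_\sigma$; and automorphisms of a fixed extension are exactly $b\mapsto b+i(f(p(b)))$ with $f$ a derivation, composing additively. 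The one point worth making explicit, since the paper never defines $Ch(\A)$, is the dictionary you allude to at the end: with the usual signs, the $(1,1)$-shuffle of $a$ and $b$ is $a\otimes b-b\otimes a$, so a degree-$2$ Harrison cochain is precisely a \emph{symmetric} bilinear map, matching the symmetry of $g_\sigma$ forced by commutativity of $\B$; and the coboundary $\delta h(a,b)=ah(b)-h(ab)+h(a)b$ of any $1$-cochain is automatically symmetric when $\A$ is commutative and $\M$ symmetric, so no discrepancy between Harrison and Hochschild coboundaries arises in degree $2$. With that observation recorded, your three computations are complete and would serve as a proof the paper itself omits.
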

\begin{cor}
If $\A$ is a local algebra with the maximal ideal $ \mathfrak{m}$,
then $$ H_{Harr}^1(\A,\M)\cong (\frac{\mathfrak{m}}{\mathfrak{m}^2})'=T\A.$$
\end{cor}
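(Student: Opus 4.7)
The plan is to apply part (1) of the preceding proposition, which identifies $H_{Harr}^1(\A,\M)$ with the space $\mathrm{Der}(\A,\M)$ of $\K$-linear derivations $f:\A\to\M$. Since $\A$ is local with maximal ideal $\mathfrak{m}$ and $\mathfrak{m}\M=0$, the $\A$-module structure on $\M$ factors through the augmentation $\varepsilon:\A\to\A/\mathfrak{m}=\K$. In the case $\M=\K$ (which is the content of the corollary, given the preceding proposition comparing with $H_{Harr}^q(\A,\K)$), the target of the proof becomes: $\mathrm{Der}(\A,\K)\cong(\mathfrak{m}/\mathfrak{m}^2)'$.

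First I would split $\A=\K\cdot 1\oplus\mathfrak{m}$ (using local + augmented structure) and observe that any derivation $f:\A\to\K$ satisfies $f(1)=f(1\cdot 1)=\varepsilon(1)f(1)+\varepsilon(1)f(1)=2f(1)$, so $f(1)=0$ and $f$ is determined by its restriction to $\mathfrak{m}$. Next, for $a,b\in\mathfrak{m}$, the derivation rule combined with $\varepsilon(a)=\varepsilon(b)=0$ forces
\[
f(ab)=\varepsilon(a)f(b)+\varepsilon(b)f(a)=0,
\]
so $f|_{\mathfrak{m}}$ factors through $\mathfrak{m}/\mathfrak{m}^2$, giving a well-defined map $\mathrm{Der}(\A,\K)\to(\mathfrak{m}/\mathfrak{m}^2)'$.

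For the inverse direction I would take any $\phi\in(\mathfrak{m}/\mathfrak{m}^2)'$ and define $\tilde\phi:\A\to\K$ by $\tilde\phi(a)=\phi\bigl(\overline{a-\varepsilon(a)\cdot 1}\bigr)$. The verification that $\tilde\phi$ is a derivation reduces to the congruence
\[
ab-\varepsilon(a)\varepsilon(b)\cdot 1\equiv \varepsilon(b)\bigl(a-\varepsilon(a)\bigr)+\varepsilon(a)\bigl(b-\varepsilon(b)\bigr)\pmod{\mathfrak{m}^2},
\]
which follows from writing $(a-\varepsilon(a))(b-\varepsilon(b))\in\mathfrak{m}^2$ and expanding. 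The two assignments are manifestly mutual inverses.

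The only mild obstacle is bookkeeping in the derivation identity verification above, but this is completely routine modulo $\mathfrak{m}^2$. Everything else is formal consequence of Proposition 4.3(1) together with the augmented local structure of $\A$.
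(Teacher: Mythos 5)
Your argument is correct, and it is the standard one: derivations $\A\to\K$ kill $1$ and $\mathfrak{m}^2$, hence are exactly linear functionals on $\mathfrak{m}/\mathfrak{m}^2$, and the explicit inverse via $a\mapsto\phi\bigl(\overline{a-\varepsilon(a)\cdot 1}\bigr)$ checks out modulo $\mathfrak{m}^2$. The paper itself states this corollary without any proof, so there is nothing to compare against; your write-up correctly supplies the missing argument, and you handle sensibly the fact that the statement's $\M$ on the left must be read via the preceding proposition (with $\mathfrak{m}\M=0$) as reducing to the case $\M=\K$.
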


If $\A$ is a local algebra with the maximal ideal $\mathfrak{m}$,
then $$ H_{Harr}^1(\A,\M)\cong (\frac{\mathfrak{m}}{\mathfrak{m}^2})'=T\A.$$
Let $\lambda$ be a deformation of a Hom-Leibniz $\LX
$ with a finite dimensional local base $\A$
and an augmentation $\varepsilon$. consider $[f]\in H_{Harr}^2(\A,\K)$.
Suppose  $$\begin{array}{ccc}
0\longrightarrow\K& \stackrel{i}{\longrightarrow}\B& \stackrel{p}{\longrightarrow}\A\longrightarrow 0 \end{array}$$
 is a representative of the class of 1-dimensional extensions of $\A$,
corresponding to the cohomology class of $f$.
Let $I=i\otimes id:\K\otimes\l\cong\LX
\longrightarrow\B\otimes\LX
, P=p\otimes id:\B\otimes\LX
\longrightarrow\A\otimes\LX
\; \hbox{and}\;E=\hat{\varepsilon}\otimes id:\B\otimes\LX
\longrightarrow \K\otimes\LX
\cong\LX
$, where $\hat{\varepsilon}=\varepsilon\circ p$
 is the augmentation of $\B$ corresponding to the augmentation $\varepsilon$ of $\A$.
Fix a section $q : A \longrightarrow B$ of $p$ in the above extension, then the map 
$\B \rightarrow \A \oplus \K$ defined as
\begin{equation}\label{(4.3.1)}b \longrightarrow (p(b), i^{-1}(b - q(p(b)))
\end{equation}
is a $\K$-module isomorphism. Let us denote by $(a,k)_q \in \B$ the inverse of $(a,k)\in (\A \oplus\K)$
under the above isomorphism.
The cocycles $f$ representing the extension is determined by $f\big((a_1, 0)_q (a_2, 0)_q = (a_1a_2, 0)_q\big) $.
On the other hand $f$ determines the algebra structure of $\B$ by
\begin{equation}\label{(4.3.2)}(a_1, k_1)_q  (a_2, k_2)_q = (a_1a_2, a_1 k_2 + a_2 k_1 + f(a_1, a_2)_q ).\end{equation}
Suppose $dim(A) = r + 1$ and ${(m_i)}_{1\leq i\leq r}$ is a basis of the maximal ideal $\mathfrak{m_{\A}}$ of $\A$. Then
${(n_i)}_{1\leq i\leq r+1}$ is a basis of the maximal ideal $\mathfrak{m}_{\B} = p^{-1}(m_{\A})$ of $\B$,
where $n_j = (m_j, 0)_q$, for $1 \leq j\leq r$ and $n_{r+1} = (0, 1)_q$. Take the dual basis ${(\xi_i)}1\leq i\leq r$ of $\mathfrak{m'_{\A}}$. Then by the notations in Lemma \ref{lem}, we have 2-cochains $\psi_i=\psi_{\lambda,\xi_i} \in   \mathcal{C} ^2(\LX,\LX) \;\hbox{for}\;1\leq i\leq r$
 such that $[.,.]_\lambda$ can be written as
$$ [1\otimes l_1,1\otimes l_2]_\lambda=1\otimes[l_1,l_2]+\sum_{i=1}^r m_i\otimes \psi_i(l_1,l_2)\; \hbox{for}\; l_1,l_2\in \LX.
$$
Let $\chi\in  \mathcal{C} ^2( \LX,\LX) = Hom(\LX^2,\LX)$ be an arbitrary element. Define a $\B$-bilinear operation
$(B\otimes \LX)^2  \longrightarrow\B\otimes \LX$,
$$\{b_1\otimes l_1, b_2\otimes l_2\} = b_1 b_2
\otimes[l_1, l_2] +
\sum_{j=1}^r n_j \psi_j(l_1, l_2) + b_1b_2  n_{r+1} \otimes \chi(l_1, l_2).$$
It is easy to check that the $\B-$ bilinear map $\{.,.\}$ satisfies:
For $l_1, l_2 \in\B\otimes \LX
\;\hbox{and} \;l\in \LX
$
\begin{itemize}
    \item \begin{equation}\label{(4.3.3)}P\{l_1, l_2\} = [P(l_1), P(l_2)], \end{equation}
\item \begin{equation}\label{(4.3.4)}[I(l), l_1] = I[l,E(l_1)]. \end{equation}
\end{itemize}
So the Hom-Leibniz algebra structure $\lambda$ on $\A\otimes \LX
$ can be lifted to a $\B$-bilinear operation
$\{.,.\}: (\B\otimes \LX
)^{\otimes 2} \longrightarrow\B\otimes \LX$ satisfying \eqref{(4.3.3)} and \eqref{(4.3.4)}.
Define
$$\phi: (B\otimes \LX
)^{\otimes3} \longrightarrow (\B\otimes\LX
)$$
for  $b_1\otimes l_1, b_2\otimes l_2, b_3\otimes l_3\in \B\otimes\LX
$ by
\begin{eqnarray*}
 \phi(b_1\otimes l_1,b_2\otimes l_2, b_3\otimes l_3) &=&
\{b1\otimes\alpha(l1),\{b_2\otimes l_2,b3\otimes l_3\}\} + \{\{b_1
\otimes l_1,b_2\otimes l_2\}, b_3\otimes\alpha(l_3)\}\\ & & +\{\{b_1\otimes l_1, b_3\otimes l_3\}, b_2\otimes \alpha(l_2)\}.
\end{eqnarray*}
It is clear that $\{.,.\}$ satisfies the Hom-Leibniz relation if and only if $\phi = 0$. Now from property
\eqref{(4.3.3)} and the definition of $\phi$ it follows that $P\circ\phi(b_1
\otimes l_1, b_2\otimes l_2, b_3\otimes l_3) = 0$ ; for $b_1\otimes l_1, b_2\otimes l_2, b_3\otimes l_3) \in \B \otimes \LX$.
 There for $\phi$ takes values in $ker(P)$. Observe that $\phi(b_1\otimes l_1, b_2\otimes l_2, b_3\otimes l_3) = 0$,
whenever one of the arguments belongs to $ker(E)$. Suppose $b_1\otimes l_1 \in ker(E) \subset \B \otimes L$. Since
$ker(E) = ker(\hat{\varepsilon})\otimes \LX
 = p^{-1}(ker(\varepsilon))\otimes L = \mathfrak{m}_{ \B} \otimes\LX
$, we can write $b_1\otimes l_1 = \sum^{r+1}_{j=1} n_j \otimes l'_j$ with
$l'_j\in \LX
,  j = 1 ... r + 1$. Therefore $b_2\otimes l_2,  b_3\otimes l_3 \in \B\otimes L$, we get
$\phi(b_1\otimes l_1, b_2\otimes l_2, b_3\otimes l_3) = \phi(\sum^{r+1}_ {j=1}n_j\otimes l'_j, b_2\otimes l_2, b_3\otimes$ $l_3)=\sum^{r+1}_{j=1}n_j\phi(1\otimes l'j, b_1\otimes l_2, b_3\otimes l_3) = 0.$
This is because $\phi$ takes values in $ker(P) = im(I) = im(i)\otimes \LX = i(\K)\otimes \LX
$ and yet for any
element $k \in\K$ and $l \in\LX
$, $n_j\cdot i(k)\otimes l = i(p(n_j)k)\otimes l = i(m_j\cdot k)\otimes l = i(\varepsilon(m_j)k)\otimes l = 0$ for $ j = 1... r + 1$
 and $n_{r+1}\cdot i(k)\otimes l = k n^2_{r+1}\otimes l = 0$ where $( m_j \in \mathfrak{m}\subset \A \;\hbox{and}\;  m_j\cdot k = \varepsilon(mj)k)$.
 The other two cases are similar. Thus defines a linear map
$$\tilde{\phi} : (\frac{\B\otimes\LX
}{ker(E)})^{\otimes3}\longrightarrow ker(P).$$
Moreover, the surjective map  $E : \B\otimes\LX
 \longrightarrow\K\otimes\LX
 \cong\LX
$, defined by
$b\otimes l \longrightarrow \hat{\varepsilon}(b)\otimes l$, induces
an isomorphism $\frac{\B\otimes\LX
}{ker(E)}\stackrel{\tau}{\cong} \LX
$ , where
$$\tau: \LX \longrightarrow \frac{\B\otimes\LX
}{ker(E)};\tau (l) = 1\otimes l + ker(E).$$
Also, $ker(P) = im(I) = i(\K)\otimes \LX
 = \K i(1)\otimes \LX
\stackrel{\beta}{\cong}\LX
,$
where the isomorphism $\beta$  is given by $\beta(k n_{r+1}\otimes l)=k l$
with inverse $\beta^{-1}(l)=n_{r+1}\otimes l$. Thus we get a linear map $\bar{\phi} :\LX
^{\otimes3}\longrightarrow\LX
$, such that $\bar{\phi}=\beta\circ\phi\circ\tilde{f}^{\otimes3}$.
 The cochains $\bar{\phi}\in \mathcal{C}^3( \LX
,\LX
)$ and $\phi$ are related by
$n_{r+1}\otimes\bar{\phi}(l_1, l_2, l_3) = \phi(1\otimes l_1, 1\otimes l_2, 1\otimes l_3).$
\begin{prop}  The cochain $\bar{\phi}$ is a 3-cocycle.
\end{prop}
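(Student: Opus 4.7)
The plan is to establish $\delta^3\bar\phi=0$ by lifting the computation from $\LX$ back to the ambient algebra $\B\otimes\LX$ via the defining relation $n_{r+1}\otimes\bar\phi(l_1,l_2,l_3)=\phi(1\otimes l_1,1\otimes l_2,1\otimes l_3)$, then exploiting the fact that $\{\cdot,\cdot\}$ satisfies the Hom-Jacobi identity modulo $\ker(P)$ together with the square-zero property $\{\ker(P),\ker(P)\}=0$, so that the obstruction-Jacobiator identity for $\phi$ collapses on the nose.

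First I would unfold $\delta^3\bar\phi(l_1,l_2,l_3,l_4)$ using the Hom-Leibniz differential, producing four outer-bracket terms of the form $[\alpha^2(l_i),\bar\phi(\ldots)]$ or $[\bar\phi(\ldots),\alpha^2(l_j)]$ and six inner-bracket terms $\bar\phi(\alpha(l_1),\ldots,[l_i,l_j],\ldots,\alpha(l_4))$, each with its sign, and tensor the whole identity with $n_{r+1}$. For the outer terms, property \eqref{(4.3.4)} together with the $id\otimes\alpha$-equivariance of $\{\cdot,\cdot\}$ converts $n_{r+1}\otimes[\alpha^2(l_i),\bar\phi(\ldots)]$ into $\{1\otimes\alpha^2(l_i),\phi(\ldots)\}$ (with the $1\otimes l_j$'s as arguments of $\phi$); for the inner terms, the correction $\sum_j m_j\otimes\psi_j(l_i,l_j)+n_{r+1}\otimes\chi(l_i,l_j)$ inside $\{1\otimes l_i,1\otimes l_j\}$ is annihilated after multiplication by $n_{r+1}$, since $n_{r+1}\cdot n_j=0$ and $n_{r+1}^2=0$ in $\B$, so $[l_i,l_j]$ inside $\bar\phi$ can be replaced by $\{1\otimes l_i,1\otimes l_j\}$. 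This rewrites $n_{r+1}\otimes\delta^3\bar\phi(l_1,\ldots,l_4)$ as the formal expression $\delta^3\phi(1\otimes l_1,\ldots,1\otimes l_4)$ computed with $\{\cdot,\cdot\}$ in place of $[\cdot,\cdot]$. Substituting the defining formula of $\phi$ as the Jacobiator of $\{\cdot,\cdot\}$ and expanding then yields a sum of nested quadruple brackets, which cancel in pairs by the Hom-Jacobi identity for $\{\cdot,\cdot\}$ modulo $\ker(P)$ combined with $\{\ker(P),\ker(P)\}=0$.

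The main obstacle is the combinatorial bookkeeping: the expansion produces on the order of thirty quadruple bracket terms with assorted signs and $\alpha$-decorations, and organizing them for pairwise cancellation requires careful systematic grouping by nesting pattern. The argument is the Hom-analogue of the corresponding computation for ordinary Leibniz algebras performed in \cite{Fialowoski}; the $id\otimes\alpha$-equivariance of $\{\cdot,\cdot\}$ forces the powers of $\alpha$ on matching pairs to coincide automatically, so the classical cancellation scheme transfers to the Hom-setting with no essential new difficulty.
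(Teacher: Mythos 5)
Your proposal is correct and follows essentially the same route as the paper: both arguments transport $\delta^3\bar{\phi}$ back to $\B\otimes\LX$ via $\beta^{-1}(l)=n_{r+1}\otimes l$, use property \eqref{(4.3.4)} to turn the outer terms into $\{\,\cdot\,,\,\cdot\,\}$-brackets against $\phi(1\otimes l_{i},1\otimes l_{j},1\otimes l_{k})$, absorb the inner terms by noting that the corrections to $1\otimes[l_i,l_j]$ lie in $\ker(E)$ (equivalently are killed by $n_{r+1}n_j=0$, $n_{r+1}^2=0$), and then expand $\phi$ as the Hom-Jacobiator of $\{\,\cdot\,,\,\cdot\,\}$ so that the nested quadruple brackets cancel. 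The paper only writes out the first outer term explicitly and asserts the rest by symmetry, so your description is, if anything, a slightly more complete account of the same computation.
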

\begin{proof}
 The first term of $\beta^{-1} \circ\delta\bar{\phi}$ is as follows.
 \begin{eqnarray*}
\beta^{-1}([\alpha^2(l_1), \bar{\phi}(l_2, l_3, l_4)]&= &n_{r+1}\otimes[\alpha^2(l_1),\bar{\phi} (l_2, l_3, l_4)]\\
&=& I([\alpha^2(l_1),\bar{\phi}(l_2, l_3, l_4)]) (i(1) = n_{r+1})\\
&=& I([\alpha^2(l_1),E(1\otimes \bar{\phi} (l_2, l_3, l_4))])\\
&=&\{I(\alpha^2(l_1)), 1\otimes\bar{\phi} (l_2, l_3, l_4)\} \;\hbox{by \eqref{(4.3.4)}}\\
&=& \{n_{r+1}\alpha^2(l_1), 1\otimes\bar{\phi} (l_2, l_3, l_4)\}\\
&=& \{1\otimes\alpha^2(l_1), n_{r+1}\otimes\bar{\phi} (l_2, l_3, l_4)\}\\
&=& \{1\otimes\alpha^2(l_1),\bar{\phi}(1\otimes l_2, 1\otimes l_3, 1\otimes l_4)\}\\
&=& \{1\otimes \alpha^2(l_1), \{1\otimes\alpha(l_2), \{1\otimes l_3, 1\otimes l_4\}\}
\\ & &-\{1\otimes\alpha^2(l_1),\{\{1\otimes l_2, 1\otimes l_3\}, 1\alpha(l_4)\}\} \\
&&+\{1\otimes\alpha^2(l_1), \{\{1\otimes l_2, 1\otimes l_4\}, 1\otimes\alpha(l_3)\}\}.
\end{eqnarray*}
Similarly, computing other terms and substituting in the expression of $\beta^{-1} \circ\delta\bar{\phi}$, we get
$$\beta^{-1} \circ\delta\bar{\phi}(l_1, l_2, l_3,l_4) = 0 \;\hbox{i.e.}\;\; \delta^3 \bar{\phi} = 0.$$
\end{proof}
Let us show now that the cohomology class of $\bar{\phi}$ is independent of the choice of the lifting $\{.,.\}$.
Suppose $\{.,.\}$ and $\{.,.\}'$ are two $\B$-bilinear operations on $\B\otimes\LX
$, lifting the Hom-Leibniz
structure $\lambda$ on $A\otimes\LX
$. Let $\bar{\phi}$ and  $\bar{\phi}'$ be the corresponding cocycles. Set $\rho = \{.,.\}-\{.,.\}'$.
 Then
$\rho : (\B\otimes\LX
)^{\otimes2}\longrightarrow \B\otimes\LX
$ is a $\B$-linear map. Observe that
$$P \circ\rho(b_1\otimes l_1, b_2\otimes l_2) = [P(b_1\otimes l_1), P(b_2 \otimes l_2)]_\lambda - [P(b_1\otimes l_1), P(b_2\otimes
 l_2)]_\lambda = 0\;\hbox{by}\; \eqref{(4.3.3)}$$
$\rho$ takes values in $ker(P)$ and induces a linear map
$$\tilde{\rho}:( \frac{\B\otimes\LX
}{ker(E)})^{\otimes 2}\longrightarrow ker (P), $$
$$\tilde{\rho}(b_1\otimes l_1+ker(E),b_2\otimes l_2+ker(E))=\rho(b_1\otimes l_1,b_2\otimes l_2) \;\hbox{for}\b_1\otimes l_1,b_2\otimes l_2\in\B\otimes\LX
. $$
Hence we get a 2-cochain  $\bar{\rho}: \LX
^{\otimes2}\longrightarrow \LX
$ such that
$ \bar{\rho}=\beta\circ\bar{\rho}\circ\alpha^{\otimes2}\in   \mathcal{C} ^2(\LX
, \LX
)$. As before,
for $l_1, l_2\in \LX
$, we have $n_{r+1}\otimes\bar{\rho}(l_1, l_2) = \rho(1\otimes l_1, 1\otimes l_2)$.
Then a straightforward computation yields.
Hence $( \bar{\phi}'-\bar{\phi})=\delta\bar{\rho}$.
Suppose a $\B$-bilinear operation $\{.,.\}$ on $\B\otimes \LX
$ lifting the Hom-Leibniz algebra structure
$[., .]_\lambda$ on $\A\otimes\LX
$. Then any other $\B$-bilinear operation on $\B\otimes \LX
$,
lifting $[.,.]_\lambda$ is determined by a 2-cochain $\rho$ as follows.
Define $\{.,.\}':(\B\otimes\LX
)^{\otimes 2}\longrightarrow (\B\otimes\LX
)$
 by $\{1\otimes l_1,1\otimes l_2\}'=\{1\otimes l_1,1\otimes l_2\}+I\circ\rho(E(1\otimes l_1),E(1\otimes l_2))$ for $1\otimes l_1 ,1\otimes l_2\in \B\otimes \LX
.$
Then it is easy to see that $\{.,.\}'$ is lifting of $[.,.]_\lambda$ such that the cochain
 $ \bar{\rho}$ induced
by the difference  $\{.,.\}'-\{.,.\}$ is the given 2-cochain $\rho$ .
The above consideration defines a map $\theta_\lambda:H^2_{Harr}(\A,\K)\longrightarrow H^3(\LX
,\LX)$
 by $\theta_\lambda([f])=[\bar{\phi}]$ ,where $[\bar{\phi}]$ is the cohomology class of $\bar{\phi}$.
  The map $\theta_\lambda$ is called the obstruction map.
\begin{prop} Let $\lambda$ be a deformation of the Hom-Leibniz algebra  $\LX
$ with base $\A$
and  $\B$ be a $1$-dimensional extension of $\A$ corresponding to the cohomology class $[f]\in
H^2_{Harr}(\A,\K)$. Then $\lambda$ can be extended to a deformation of  $\LX
$ with base $\B$ if only if the
obstruction $\theta_\lambda([f]) = 0$.
\end{prop}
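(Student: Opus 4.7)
The plan is to prove the two implications using the machinery already set up in this section: the obstruction class $\theta_\lambda([f])=[\bar\phi]\in H^3(\LX,\LX)$ is built from the failure of an arbitrary $\B$-bilinear lifting $\{.,.\}$ of $[.,.]_\lambda$ to satisfy the Hom-Leibniz identity, and we have already shown (i) $\bar\phi$ is a 3-cocycle, (ii) changing the lifting by a 2-cochain $\rho$ changes $\bar\phi$ by $\delta\bar\rho$, and (iii) every 2-cochain arises this way. These three facts reduce the statement to a bookkeeping exercise.

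For the ``only if'' direction, I would assume that $\lambda$ extends to a deformation $\tilde\lambda$ of $\LX$ with base $\B$. By definition the bracket $[.,.]_{\tilde\lambda}$ is a $\B$-bilinear operation on $\B\otimes\LX$ satisfying the Hom-Jacobi identity, and the compatibility $(\varepsilon\otimes\id)\circ(p\otimes\id)=\varepsilon\otimes\id$ (together with the requirement that $\tilde\lambda$ pushes down along $p$) forces $[.,.]_{\tilde\lambda}$ to be a lifting of $[.,.]_\lambda$ in the sense of \eqref{(4.3.3)}, while \eqref{(4.3.4)} follows from $\B$-bilinearity and the fact that $I(\LX)=i(\K)\otimes\LX$ has trivial multiplication with $\ker E$. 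Taking $\{.,.\}:=[.,.]_{\tilde\lambda}$ one obtains $\phi\equiv 0$, and hence $\bar\phi=0$, so $\theta_\lambda([f])=0$ in $H^3(\LX,\LX)$.

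For the ``if'' direction, I would start with any $\B$-bilinear lifting $\{.,.\}$ of $[.,.]_\lambda$ (such a lifting was constructed explicitly earlier in the section, so its existence is not in question). Let $\bar\phi$ be the associated 3-cocycle. The hypothesis $\theta_\lambda([f])=0$ means $[\bar\phi]=0$, so there exists a 2-cochain $\bar\rho\in\mathcal{C}^2(\LX,\LX)$ with $\bar\phi=\delta\bar\rho$. Using the explicit recipe from the text, define a new lifting
\[
\{1\otimes l_1,1\otimes l_2\}':=\{1\otimes l_1,1\otimes l_2\}-I\circ\bar\rho\bigl(E(1\otimes l_1),E(1\otimes l_2)\bigr),
\]
extended $\B$-bilinearly. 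By the paragraph preceding the proposition, this is again a lifting, and its associated 3-cocycle is $\bar\phi'=\bar\phi-\delta\bar\rho=0$. Consequently the obstruction $\phi'$ takes values in $\ker(P)\cap\ker(E)$-related data that the paragraph reduces to $\bar\phi'$; thus $\phi'=0$ and $\{.,.\}'$ satisfies the Hom-Leibniz identity. The triple $(\B\otimes\LX,\{.,.\}',\id\otimes\alpha)$ is therefore a Hom-Leibniz $\B$-algebra that extends $\lambda$, since \eqref{(4.3.3)} guarantees $(p\otimes\id)\{.,.\}'=[.,.]_\lambda\circ(p\otimes\id)^{\otimes 2}$, giving the required push-out property along $\varepsilon=\varepsilon\circ p$.

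The main obstacle is the backward direction, specifically verifying that the modified lifting $\{.,.\}'$ really does produce the zero 3-cocycle and that this genuinely entails the Hom-Jacobi identity on the full $\B$-module $\B\otimes\LX$, rather than merely on the generating elements $1\otimes l$. This is where one uses $\B$-bilinearity together with the facts, established earlier, that $\phi'$ vanishes whenever one argument lies in $\ker(E)$ and takes values in $\ker(P)$; combined with the reduction $\phi'=n_{r+1}\otimes\bar\phi'$ on $(1\otimes\LX)^{\otimes 3}$, this forces $\phi'\equiv 0$ everywhere once $\bar\phi'=0$.
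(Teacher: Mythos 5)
Your proof is correct and follows essentially the same route as the paper: the forward direction uses the extension itself as a lifting with vanishing obstruction cocycle, and the backward direction modifies an arbitrary lifting by $I\circ\bar\rho(E(\cdot),E(\cdot))$ using $\bar\phi=\delta\bar\rho$ to kill the cocycle (the paper phrases this as adding $\rho'=-\rho$, which is the same correction up to sign convention). The only difference is that you spell out the converse, which the paper dismisses as obvious, and you are slightly more explicit about why $\bar\phi'=0$ forces $\phi'\equiv 0$ on all of $\B\otimes\LX$ via $\B$-bilinearity and the vanishing on $\ker(E)$.
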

\begin{proof} Suppose $\theta_\lambda([f]) = 0$. let
\begin{equation}\label{(1)}
\begin{array}{ccc}
0\longrightarrow\K& \stackrel{i}{\longrightarrow}\B& \stackrel{p}{\longrightarrow}\A\longrightarrow 0 \end{array} \end{equation}
be a $1$-dimensional extension representing the cohomology class $[f]$. Let $\{.,.\}$ be a lifting of
the Hom-Leibniz algebra structure $\lambda$ on $\A\otimes\LX
$ to a $\B$-bilinear operation on $\B\otimes\LX
$.
 Let $\bar{\phi}$
be the associated cocycle in $ \mathcal{C} ^3( \LX
, \LX
)$ as described above. Then $\theta_\lambda([f]) =  [\bar{\phi}] = 0$ implies
$\bar{\phi} =\delta\rho $ for some $\rho\in  \mathcal{C} ^2( \LX
, \LX
)$. Now take $\rho'=-\rho$ and define a new linear map
$\{.,.\}':(\B\otimes\LX
)^{\otimes 2}\longrightarrow (\B\otimes\LX
)$
by $\{1\otimes l_1,1\otimes l_2\}'=\{1\otimes l_1,1\otimes l_2\}+I\circ\rho(E(1\otimes l_1),E(1\otimes l_2))$. 
If $\bar{\phi'}$ denotes the cocycle corresponding to $\{.,.\}'$, we have $\bar{\phi'}-\bar{\phi} = \delta\bar{\rho'}=-\bar{\phi}$.
 Hence $\bar{\phi'} = 0$
which implies $\phi' =0$. Therefore,  $\{.,.\}'$ is a Hom-Leibniz algebra structure
 on $\B\otimes\LX
$ extending $\lambda$. The converse is obvious.
\end{proof}

Let $S$ be the set of all isomorphism classes of deformation $\mu$ of $\LX$ with base $B$ such that $p_{\ast}\mu=\lambda$. The group of automorphisms $Aut$ of the extension \eqref{(1)}
has a natural action $\sigma_1$ of $\A$ on $S$, given by $\mu\rightarrow u_{\ast}\mu$ for $u\in \A$. Suppose that $\mu$ and $\mu'$ are two deformations of $\LX$ with base $B$  such that $p_{\ast}\mu=p_{\ast}\mu'=\lambda$. Let $\psi\in  \mathcal{C} ^2(\LX,\LX)$ be the cochain determined by   $[.,.]_{\mu}-[.,.]_{\mu'}$. The map $\sigma_2:\mathbb{H} \times S\rightarrow S$   is defined as $\sigma_2(\psi,\mu)=\mu'$. The relationship between the two action $\sigma_1$ and $\sigma_2$ on $S$ is described in the following proposition.
\begin{prop}
Let $\lambda$ be a deformation of the Hom-Leibniz algebras $\LX
$ with base $\A$ and let
$$\begin{array}{ccc}
0\longrightarrow\K& \stackrel{i}{\longrightarrow}\B& \stackrel{p}{\longrightarrow}\A\longrightarrow 0 \end{array}$$
be a given extension of $\A$. If $u:\B\longrightarrow\B $ is an automorphism of this
extension which corresponds to an element $h \in H^1_{Harr}(\A,\K)=T\A$, then for any deformation $\mu$
of $\LX
$ with base $\B$, such that $p_{\ast}\mu=\lambda$, the difference $[.,. ]_{u_\mu}-[.  ,. ]_\mu$ is a cocycle in the cohomology
class $d\lambda(h)$. This means the operation $\sigma_1 $ and $\sigma_2 $ on $S$ are related to each
other by the differential $d\lambda:T\A\rightarrow \mathbb{H}.$
\end{prop}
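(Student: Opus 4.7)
My plan is to first put the automorphism $u$ in a normal form dictated by the identification $T\A=H^1_{Harr}(\A,\K)$, and then carry out a direct comparison of the brackets $[.,.]_\mu$ and $[.,.]_{u_\ast\mu}$ using the push-out formula recalled in the remark after Definition~2.5, combined with the hypothesis $p_\ast\mu=\lambda$.

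By the interpretation of $H^1_{Harr}(\A,\K)$ as the group of automorphisms of the given extension, $u$ fixes $i(\K)$ and induces the identity on $\A$; choosing a $\K$-linear section $q:\A\to\B$ of $p$, this forces the explicit form
$$u=\mathrm{id}_\B+i\circ h\circ p,$$
where $h:\A\to\K$ is a derivation with $h(1)=0$. Because $h$ is a derivation into the $\A$-module $\K$, it vanishes on $\mathfrak{m}^2$ and therefore descends to the element of $(\mathfrak{m}/\mathfrak{m}^2)'=T\A$ attached to $u$.

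Next I would write the bracket of $\mu$ in the canonical form
$$[1\otimes l_1,1\otimes l_2]_\mu=1\otimes[l_1,l_2]+\sum_j n_j\otimes y_j,\qquad n_j\in\mathfrak{m}_\B,\ y_j\in\LX,$$
and apply the push-out formula to $u_\ast\mu$ to obtain
$$[1\otimes l_1,1\otimes l_2]_{u_\ast\mu}=1\otimes[l_1,l_2]+\sum_j u(n_j)\otimes y_j.$$
Subtracting and inserting the normal form of $u$, the difference lies in $i(\K)\otimes\LX$:
$$[1\otimes l_1,1\otimes l_2]_{u_\ast\mu}-[1\otimes l_1,1\otimes l_2]_\mu=\sum_j i\bigl(h(p(n_j))\bigr)\otimes y_j.$$
Under the canonical identification $i(\K)\otimes\LX\cong\LX$ employed in Section~4, this corresponds to the $2$-cochain $\bar\rho(l_1,l_2)=\sum_j h(p(n_j))\,y_j\in\mathcal{C}^2(\LX,\LX)$.

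To finish, I would use $p_\ast\mu=\lambda$: the push-out formula gives
$$[1\otimes l_1,1\otimes l_2]_\lambda=1\otimes[l_1,l_2]+\sum_j p(n_j)\otimes y_j,$$
and applying $h\otimes\mathrm{id}$ (with $h(1)=0$) yields
$$\psi_{\lambda,h}(l_1,l_2)=(h\otimes\mathrm{id})\bigl([1\otimes l_1,1\otimes l_2]_\lambda\bigr)=\sum_j h(p(n_j))\,y_j=\bar\rho(l_1,l_2).$$
By Lemma~\ref{lem} the right-hand side is a $2$-cocycle, and its cohomology class is by definition $d\lambda(h)\in\mathbb{H}$, so $[.,.]_{u_\ast\mu}-[.,.]_\mu$ represents $d\lambda(h)$, as required. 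The main obstacle I anticipate is the first step: one has to verify carefully that the Harrison class of $u$ really is represented by the derivation $h$ in the normal form $u=\mathrm{id}+i\circ h\circ p$, using that $u$ must be compatible with the multiplication of $\B$ encoded by the Harrison $2$-cocycle $f$. Once this identification is settled, the rest is a direct substitution.
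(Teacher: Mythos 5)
The paper states this proposition without any proof at all (it is immediately followed by the corollary; the authors implicitly defer to the Leibniz-algebra case of \cite{Fialowoski}), so there is no in-text argument to compare yours against; your proposal fills a genuine gap and its strategy is the expected one. The argument is correct: the normal form $u=\mathrm{id}_{\B}+i\circ h\circ p$ with $h:\A\to\K$ a derivation (hence $h(1)=0$ and $h(\mathfrak{m}^2)=0$, so $h$ descends to $(\mathfrak{m}/\mathfrak{m}^2)'=T\A$) is exactly how the Harrison $H^1$ class of $u$ is encoded; the push-out formula from Section 2 then gives the difference of brackets as $\sum_j i\bigl(h(p(n_j))\bigr)\otimes y_j$, and the hypothesis $p_\ast\mu=\lambda$ identifies the corresponding $2$-cochain with $\psi_{\lambda,h}$, whose class is $d\lambda(h)$ by definition of the differential of a deformation. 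Two small points to tighten. First, Lemma \ref{lem} is stated (and proved) only for \emph{infinitesimal} deformations, its proof invoking $\mathfrak{m}^2=0$, whereas here $\lambda$ is a general deformation with base $\A$; the cocycle property of $\psi_{\lambda,h}$ still holds, but the reason is that $h$ annihilates $\mathfrak{m}^2$ rather than that $\mathfrak{m}^2$ vanishes, so you should rerun the lemma's computation with that substitution instead of citing it verbatim. Second, to land in $\mathcal{C}^2(\LX,\LX)$ as defined in Section 1 you should also check the $\alpha$-equivariance condition $\alpha\circ\psi_{\lambda,h}(l_1,l_2)=\psi_{\lambda,h}(\alpha(l_1),\alpha(l_2))$, which follows from multiplicativity of $\mathrm{id}\otimes\alpha$ with respect to $[.,.]_\lambda$. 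With those remarks incorporated, the proof is complete.
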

\begin{cor}
Suppose that for a deformation $\lambda$ of the Hom-Leibniz algebra $\LX
$ with
 base $\A$, the differential $d\lambda:T\A\rightarrow \mathbb{H}$ is onto. Then the group of automorphisms $\A$ of the extension
 \eqref{(1)} operates transitively on the set of equivalence classes of deformations $\mu$ of $\LX
$
 with base $\B$ such that $p_{\ast}\mu=\lambda$. In the other words, if $\mu$ exists, it is unique up to an isomorphism
 and an automorphism of this extension.
\end{cor}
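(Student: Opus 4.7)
The plan is to reduce the transitivity claim to the preceding proposition, which identifies the discrepancy between $\sigma_1$ and $\sigma_2$ on $S$ with the differential $d\lambda$. First, I would take any two equivalence classes $[\mu],[\mu']\in S$ with representatives satisfying $p_{\ast}\mu = p_{\ast}\mu' = \lambda$. Their bracket difference $\psi:=[.,.]_{\mu'}-[.,.]_\mu$ takes values in $\ker(p)\otimes\LX\cong\LX$, and a computation parallel to Lemma \ref{lem} (expanding the Hom-Jacobi identity for both brackets and subtracting, keeping in mind that $\ker(p)^2 = 0$) shows that $\psi$ is a $2$-cocycle whose cohomology class $[\psi]\in\mathbb{H}$ depends only on the equivalence classes.

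Next, surjectivity of $d\lambda: T\A\to\mathbb{H}$ produces $h\in T\A$ with $d\lambda(h)=[\psi]$. Under the isomorphism $T\A\cong H^1_{Harr}(\A,\K)$ from the corollary in Section~4, $h$ corresponds to an automorphism $u\in\Aut$ of the extension \eqref{(1)}. Applying the preceding proposition to $u$ and $\mu$ then gives that $[.,.]_{u_{\ast}\mu}-[.,.]_\mu$ represents the class $d\lambda(h)=[\psi]$. Subtracting, the cochain $[.,.]_{u_{\ast}\mu}-[.,.]_{\mu'}$ is cohomologous to zero, hence equals $\delta^1\tau$ for some $\tau\in\mathcal{C}^1(\LX,\LX)$.

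The final step is to promote $\tau$ to a Hom-Leibniz $\B$-algebra isomorphism $\phi = \id + I\circ\tau\circ E$ between $(\B\otimes\LX,[.,.]_{u_{\ast}\mu})$ and $(\B\otimes\LX,[.,.]_{\mu'})$ commuting with the augmentation. The coboundary relation $\delta^1\tau = [.,.]_{u_{\ast}\mu}-[.,.]_{\mu'}$ translates, via the standard unwinding used to prove that $\sigma_2$ is well defined on cohomology classes, precisely into the fact that $\phi$ preserves the bracket. Once this equivalence is established, one has $u_{\ast}[\mu]=[\mu']$ in $S$, so the $\Aut$-action $\sigma_1$ is transitive, which is exactly the ``uniqueness up to an isomorphism and an automorphism of this extension'' asserted.

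The main obstacle I anticipate is this last bookkeeping step: verifying that $\phi$ is genuinely a Hom-Leibniz $\B$-algebra isomorphism, i.e.\ that it is $\B$-linear, commutes with $\id\otimes\alpha$, and satisfies $(\varepsilon'\otimes\id)\circ\phi = \varepsilon'\otimes\id$. The $\B$-linearity and augmentation compatibility are essentially forced because $\tau$ contributes only in the $\ker(p)$-direction and $\ker(p)^2=0$; compatibility with the Hom-structure should follow from the fact that the coboundary $\delta^1$ was defined using the Hom-adapted formula of Section 1, so $\tau$ may be chosen to be a Hom-cochain and thus intertwine $\id\otimes\alpha$ on both sides.
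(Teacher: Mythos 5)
Your argument is correct and is exactly the intended derivation: the paper states this corollary without proof, as an immediate consequence of the preceding proposition relating $\sigma_1$ and $\sigma_2$ through $d\lambda$, and your steps (cocycle difference of two liftings, surjectivity of $d\lambda$ to produce the automorphism $u$, and the coboundary $\delta^1\tau$ upgraded to the equivalence $\id + I\circ\tau\circ E$ using that $\ker(P)^2=0$ and that cochains are Hom-compatible by definition) fill in precisely the details the paper omits.
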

Suppose now that $M$ is finite dimensional $\A$-module satisfying 
the condition $\mathfrak{m}M=0$, where $\mathfrak{m}$ is the maximal ideal of $\A$.
The previous results can be generalized from the 1-dimensional extension to a more general extension.
$$\begin{array}{ccc}
0\longrightarrow M& \stackrel{i}{\longrightarrow}\B& \stackrel{p}{\longrightarrow}\A\longrightarrow 0. \end{array}$$
A deformation $\mu$ with base $\B$ such that $p_{\ast}\mu=\lambda$ exists if and only if the obstruction $\theta_\lambda([f]) = 0$.
 If  the differential $d\lambda:T\A\longrightarrow \mathbb{H}$ is onto, then, if $\mu$ exists, it is unique up to an isomorphism
 and an automorphism of this extension.
\begin{prop}
Suppose $\A_1$ and $\A_2$ are finite dimensional local algebras with augmentation $\varepsilon_1$ and $\varepsilon_2$,
respectively. Let $\phi:\A_2\longrightarrow\A_1$ be an algebra homomorphism with $\phi(1)=1$
and $\varepsilon_1\circ\phi=\varepsilon_2$. Suppose $\lambda_2$ is a deformation of a Hom-Leibniz algebra $\LX
$
with base $\A_2$ and $\lambda_1=\phi_\ast\lambda_2$  is the push-out via $\phi$. Then following diagram commutes.
$$
\begin{array}{ccccc} H^2_{Harr}(\A_1,\K)\\
\phi^\ast\downarrow&
\searrow^{\theta_{\lambda_1}}\\
H^2_{Harr}(\A_2,\K) & \stackrel{\theta_{\lambda_2}}{\longrightarrow}&  H
^3(\LX,\LX)\\
\end{array}
$$
\end{prop}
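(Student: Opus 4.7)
The plan is to prove commutativity by producing, for each class $[f]\in H^2_{Harr}(\A_1,\K)$, a common $3$-cocycle representative for $\theta_{\lambda_1}([f])$ and $\theta_{\lambda_2}(\phi^\ast[f])$. The key idea is to realize $\phi^\ast[f]$ as the pullback of the extension representing $[f]$, and then to transport a chosen lift of $\lambda_1$ along this pullback to a lift of $\lambda_2$, so that the two resulting obstruction cochains coincide on the nose.

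First, represent $[f]$ by a $1$-dimensional extension $0\to\K\xrightarrow{i_1}\B_1\xrightarrow{p_1}\A_1\to 0$, and form the fibered product $\B_2:=\A_2\times_{\A_1}\B_1$. Then $0\to\K\xrightarrow{i_2}\B_2\xrightarrow{p_2}\A_2\to 0$, with $i_2(k)=(0,i_1(k))$ and $p_2(a,b)=a$, is an extension of $\A_2$ by $\K$ representing $\phi^\ast[f]$. The projection $\tilde\phi:\B_2\to\B_1$, $(a,b)\mapsto b$, is a $\K$-algebra homomorphism with $p_1\circ\tilde\phi=\phi\circ p_2$ and $\tilde\phi\circ i_2=i_1$. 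Fixing a section $q_1$ of $p_1$, the formula $q_2(a):=(a,q_1(\phi(a)))$ defines a section of $p_2$ satisfying $\tilde\phi\circ q_2=q_1\circ\phi$.

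Next, pick a $\B_1$-bilinear lift $\{.,.\}_1$ of $[.,.]_{\lambda_1}$ satisfying the analogues of (4.3.3)--(4.3.4), specified by a cochain $\chi_1\in\mathcal{C}^2(\LX,\LX)$. Define $\{.,.\}_2$ on $\B_2\otimes\LX$ by the analogous formula with the same cochain $\chi_2:=\chi_1$. Because $\lambda_1=\phi_\ast\lambda_2$, the $\mathfrak{m}_{\A_i}$-parts of the brackets correspond under $\phi$, so $\{.,.\}_2$ is a $\B_2$-bilinear lift of $[.,.]_{\lambda_2}$ and the intertwining $(\tilde\phi\otimes id)\{x,y\}_2=\{(\tilde\phi\otimes id)x,(\tilde\phi\otimes id)y\}_1$ holds for all $x,y\in\B_2\otimes\LX$. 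By $\B_2$-bilinearity this intertwining reduces to the case of generators $x=1\otimes l_1,\ y=1\otimes l_2$, where it follows from the compatibility relations $\tilde\phi\circ q_2=q_1\circ\phi$ and $\tilde\phi\circ i_2=i_1$ established above.

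Consequently the associators $\phi^{(i)}$ constructed from $\{.,.\}_i$ satisfy $(\tilde\phi\otimes id)\circ\phi^{(2)}=\phi^{(1)}\circ(\tilde\phi\otimes id)^{\otimes 3}$. Passing to the reduced cochains $\bar\phi^{(i)}\in\mathcal{C}^3(\LX,\LX)$ via the isomorphisms $\tau_i$ and $\beta_i$ constructed earlier in Section~4, and using that $\tilde\phi$ restricts to the identity on $\K$, one obtains $\bar\phi^{(2)}=\bar\phi^{(1)}$ as $3$-cochains in $\mathcal{C}^3(\LX,\LX)$. Therefore $\theta_{\lambda_2}(\phi^\ast[f])=[\bar\phi^{(2)}]=[\bar\phi^{(1)}]=\theta_{\lambda_1}([f])$, and the diagram commutes. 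The main obstacle is the verification that $\{.,.\}_2$ (defined by re-using $\chi_1$) is a lift of $[.,.]_{\lambda_2}$ rather than of some unrelated $\A_2$-deformation; this is precisely where the hypothesis $\lambda_1=\phi_\ast\lambda_2$ enters essentially. Once this compatibility is in hand, the remainder is routine diagram-chasing modulo the bookkeeping of the identifications $\tau_i,\beta_i$.
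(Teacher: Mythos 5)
The paper states this proposition without any proof (it is the last result of Section~4, left to the reader by analogy with the Leibniz-algebra case in the cited work of Fialowski--Mandal--Mukherjee), so there is no in-paper argument to compare against. Your proof is correct and is the standard naturality argument one would expect: realize $\phi^\ast[f]$ by the pullback extension $\B_2=\A_2\times_{\A_1}\B_1$, use the covering map $\tilde\phi:\B_2\to\B_1$ together with $\lambda_1=\phi_\ast\lambda_2$ to transport a lift of $\lambda_2$ to the chosen lift of $\lambda_1$, and conclude that the two obstruction cochains agree on the nose via $\tilde\phi\circ i_2=i_1$. The only point worth making explicit is that you are free to work with this particular representative extension and this particular lift because the independence of $\theta_{\lambda}$ from both choices was already established earlier in Section~4; with that remark your argument is complete.
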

\section{Construction of versal deformation}
We have constructed in the class of infinitesimal deformations of a Hom-Leibniz algebra, a universal deformation, that is one which induces all the other and the homomorphism is unique. It is known that, in general,  in the category of deformations of an algebraic object there is no universal deformations. But under certain natural conditions it is possible to get a "versal" object which still induces all non-equivalent deformations. The aim of this section is to extend the construction of versal deformation, given for Leibniz algebras in \cite{Fialowoski}, to Hom-Leibniz algebras.
Let $(\LX
,[.,. ],\alpha)$ be a Hom-Leibniz algebra with finite dimensional second cohomology group ($dim(\mathbb{H})<\infty$). Consider the extension 
$$\begin{array}{ccc}
0\longrightarrow H'& \stackrel{i}{\longrightarrow}\C_1& \stackrel{p}{\longrightarrow}\C_0\longrightarrow 0 \end{array}$$
where $C_0=\K$ and $C_1=\K\oplus \mathbb{H}'$ as before. Let $\eta_1$ be the universal infinitesimal  deformation with base $C_1$.
Similarly to  classical case we proceed by induction.  Suppose for some $k\geq 1$ we have constructed
 a finite dimensional local algebra $C_k$ and a deformation $\eta_k$ of $\LX
$ with base $C_k$.
 $$ \mu: H^2_{Harr}(C_k,\K)\longrightarrow (Ch_2(C_k))'$$ be a homomorphism sending a cohomology
 class to a cocycle representing the class. Let $$ f_{C_k}: Ch_2(C_k)\longrightarrow H^2_{Harr}(\C_k,\K)' $$
  be the dual of $\mu$.  Then we have the following extension of $C_k$:
\begin{equation}\label{(11)}
\begin{array}{ccc}
0\longrightarrow H^2_{Harr}(C_k,\K)'& \stackrel{\bar{i}_{k+1}}{\longrightarrow}\bar{C}_{k+1}& \stackrel{\bar{p}_{k+1}}{\longrightarrow}C_{k}\longrightarrow 0 
\end{array}.
\end{equation}
The corresponding  obstruction $\theta_{\eta_k}([f_{C_k}])\in  H^2_{Harr}(C_k,\K)'\otimes H^3(\LX
,\LX
) $
gives  a linear map
$w_k: H^2_{Harr}(C_k,\K)\longrightarrow H^3(\LX
,\LX
) $
with the dual map
  $$w_k: H^3(\LX
,\LX
)'\longrightarrow H^2_{Harr}(C_k,\K)'. $$
We have an induced extension   $$\begin{array}{ccc}
0\longrightarrow coker(w_k')\longrightarrow\frac{\bar{C}_{k+1}}{i_{k+1}\circ w'_k(H^3(\LX,\LX))'}&\longrightarrow C_{k}\longrightarrow 0 .\end{array}$$
Since $coker(w_k')\cong(ker(w_k))'$, it yields an extension
\begin{equation}\label{extVersal1}  \begin{array}{ccc}
0\longrightarrow(ker(w_k))' & \stackrel{i_{k+1}}{\longrightarrow}C_{k+1}& \stackrel{p_{k+1}}{\longrightarrow}C_{k}\longrightarrow 0 \end{array} 
\end{equation}
where $C_{k+1}=\bar{C}_{k+1}/{i_{k+1}\circ w'_k(H^3(\LX,\LX))'}$ and $i_{k+1}$, $p_{k+1}$ are
 the mappings induced by $\bar{i}_{k+1}$, $\bar{p}_{k+1}$, respectively.
\begin{prop}
The deformation $\eta_k$ with base $C_k$ of a Hom-Leibniz algebra $\LX
$ admits an extension
to a deformation with base $C_{k+1}$, which is unique up to an isomorphism and automorphism of the extension \eqref{extVersal1}.

By induction, the above process yields a sequence of finite dimensional local
algebra $C_k$ and deformations $\eta_k$ of the Hom-Leibniz algebra $\LX
$
with base $C_k$ $$ \K\stackrel{p_{1}}{\longleftarrow} C_1\stackrel{p_{2}}{\longleftarrow}....\stackrel{p_{k}}{\longleftarrow}C_k\stackrel{p_{k+1}}{\longleftarrow}C_{k+1}...$$
  such that ${p_{k+1}}_\ast \eta_{k+1}=\eta_k$.

 Thus by taking the projective limit we obtain a formal deformation $\eta$
of $\LX
$ with base $C=\underleftarrow{lim}C_k$.
\end{prop}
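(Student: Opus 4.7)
The plan is to argue inductively on $k$, using the generalized obstruction calculus from Section 4 (in the version valid for extensions by an arbitrary finite-dimensional module $M$ with $\mathfrak{m}M=0$), and then to pass to the projective limit. Throughout the induction I maintain the auxiliary hypothesis that the differential $d\eta_k : TC_k \to \mathbb{H}$ is surjective; this holds for $k=1$ because $TC_1 \cong \mathbb{H}$ and $d\eta_1$ is the identity under this identification, by construction of the universal infinitesimal deformation $\eta_1$ established earlier.

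For the inductive step, I first identify the obstruction to extending $\eta_k$ along the extension \eqref{(11)}. Applied to the class $[f_{C_k}]$, the obstruction proposition gives $\theta_{\eta_k}([f_{C_k}])\in H^2_{Harr}(C_k,\K)'\otimes H^3(\LX,\LX)$, which under the canonical pairing is precisely the linear map $w_k : H^2_{Harr}(C_k,\K)\to H^3(\LX,\LX)$. The extension \eqref{extVersal1} is obtained from \eqref{(11)} by pushing out along the surjection $H^2_{Harr}(C_k,\K)' \twoheadrightarrow H^2_{Harr}(C_k,\K)'/\mathrm{im}(w_k') \cong (\ker w_k)'$. By functoriality of the obstruction map (the commutative-diagram proposition at the end of Section 4, in its coefficient-module version), the obstruction of the quotient extension equals the composition $\ker w_k \hookrightarrow H^2_{Harr}(C_k,\K)\xrightarrow{w_k} H^3(\LX,\LX)$, which vanishes by definition. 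The generalized existence criterion then produces an extension $\eta_{k+1}$ of $\eta_k$ to a deformation with base $C_{k+1}$, and the generalized corollary -- whose hypothesis is exactly the surjectivity of $d\eta_k$ -- yields uniqueness up to an isomorphism and an automorphism of \eqref{extVersal1}.

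To close the induction I verify that $d\eta_{k+1}:TC_{k+1}\to\mathbb{H}$ is again surjective. This follows from the identity ${p_{k+1}}_{\ast}\eta_{k+1}=\eta_k$, which forces a factorization $d\eta_k = d\eta_{k+1}\circ(TC_{k+1}\to TC_k)$; the map $TC_{k+1}\to TC_k$ is surjective because the kernel $(\ker w_k)'$ of $p_{k+1}$ sits inside $\mathfrak{m}_{C_{k+1}}^2$ by construction of the quotient $C_{k+1}$, so surjectivity of $d\eta_k$ forces surjectivity of $d\eta_{k+1}$. Iterating produces the tower $\K\stackrel{p_1}{\longleftarrow}C_1\stackrel{p_2}{\longleftarrow}\cdots\stackrel{p_{k+1}}{\longleftarrow}C_{k+1}\longleftarrow\cdots$ with ${p_{k+1}}_{\ast}\eta_{k+1}=\eta_k$ for every $k$; taking $C=\underleftarrow{\lim}\, C_k$ gives a complete local algebra on which the compatible family $\{\eta_k\}$ assembles into the desired formal deformation $\eta$ of $\LX$.

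The principal technical obstacle I anticipate is the precise identification $\theta_{\eta_k}([f_{C_k}])=w_k$, which requires tracing through the definition of the obstruction map of Section 4 against the cocycle description of $f_{C_k}$; once this identification is in hand, the vanishing of the obstruction for the quotient extension is a formal duality argument, and everything else reduces to bookkeeping with the general extension-by-$M$ version of Section 4.
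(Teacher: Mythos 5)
Your argument is correct and follows essentially the same route the paper intends: the paper states this proposition with no written proof, relying precisely on the construction of the extension \eqref{extVersal1}, the vanishing of the (restricted) obstruction on $\ker(w_k)$, and the existence and uniqueness criteria of Section 4 under surjectivity of the differential, all of which you have spelled out together with the correct base case $d\eta_1=\mathrm{id}$. The only quibble is a directional slip in your tangent-space bookkeeping --- since $T\A=(\mathfrak{m}/\mathfrak{m}^2)'$, the surjection $\mathfrak{m}_{k+1}/\mathfrak{m}_{k+1}^2\to\mathfrak{m}_k/\mathfrak{m}_k^2$ dualizes to an injection $TC_k\hookrightarrow TC_{k+1}$ through which $d\eta_k$ factors as $d\eta_{k+1}$ precomposed with that injection --- but this does not affect the conclusion that surjectivity of $d\eta_k$ propagates to $d\eta_{k+1}$.
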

\section{Massey Brackets and obstructions}
In this section we show a relationship between obstructions and Massey brackets in the case of Hom-Leibniz algebras, see  \cite{ Fial,Ashis,Retakh} for the classical case. This is needed to make more specific computation in the construction of versal deformations. The obstructions $\theta_k:H^2_{Harr}(C_k,\K)\rightarrow H^3(\LX,\LX),$ may be described in terms of Massey products.

Suppose $(C,\nu,d)$ is a differential  graded Lie algebra. The cohomology of $C$, with respect to $d$, is denoted by $\emph{H}$.  Our main example is $H(\LX,\LX)$.   Let $F$ be a graded commutative
coassociative   coalgebra, that is a graded vector space with a degree 0 mapping (comultiplication)
 $\Delta:F\rightarrow F\otimes F$ satisfying the condition $S\circ\Delta=\Delta$
and $(1\otimes\Delta)\circ\Delta=(\Delta\otimes 1)\circ\Delta$, where $S:F\otimes F\longrightarrow F\otimes F $
is defined as $S(\psi\otimes \phi)=(-1)^{|\phi||\psi|}(\phi\otimes\psi). $

Suppose also that a filtration $F_0\subset F_1\subset F$ is given in $F$ such that $F_0\subset ker(\Delta)$ and $Im(\Delta)\subset F_1\otimes F_1$.
\begin{prop}\label{prop}
Suppose a linear map $\psi: F_1\rightarrow C$ of degree 1 satisfies the condition
\begin{equation}\label{cond4}
d\psi=\nu\circ(\psi\otimes\psi)\circ\Delta.
\end{equation}
Then
$\nu\circ(\psi\otimes\psi)\circ\Delta(F)\subset ker(d).$
\end{prop}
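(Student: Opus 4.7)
The plan is to adapt the standard Maurer--Cartan/Bianchi argument to this coalgebraic setting. Since $d^{2}=0$ and $d$ is a graded derivation of $\nu$, the hypothesis $d\psi = \nu\circ(\psi\otimes\psi)\circ\Delta$ already looks like a ``curvature'' identity, and I would exploit this directly. Fix $f\in F$ and write $\Delta f = \sum f^{(1)}\otimes f^{(2)}$ in Sweedler notation; by the assumption $\Delta(F)\subset F_{1}\otimes F_{1}$, each $f^{(i)}\in F_{1}$, so the hypothesis can be applied at $f^{(i)}$, and by the same hypothesis applied once more, at each of their $\Delta$-components as well.

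First, I would apply $d$ to $B(f) := \sum \nu(\psi f^{(1)},\psi f^{(2)})$ using the graded Leibniz rule in the DGLA $(C,\nu,d)$, producing two terms, each involving one factor of $d\psi$. Substituting the hypothesis $d\psi f^{(i)} = \nu((\psi\otimes\psi)\Delta f^{(i)})$ into both terms turns $dB(f)$ into a sum of iterated brackets of three $\psi$-values, taken against the two iterated coproducts $(\Delta\otimes\id)\Delta f$ and $(\id\otimes\Delta)\Delta f$.

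Second, by coassociativity these two iterated coproducts coincide, so after relabelling both contributions can be written as a single sum over the triple coproduct $\sum f^{(1)}\otimes f^{(2)}\otimes f^{(3)}$. The resulting expression is, up to signs, the left-hand side of the graded Jacobi identity applied to $\psi f^{(1)},\psi f^{(2)},\psi f^{(3)}$; invoking the Jacobi identity in $C$, together with the cocommutativity relation $S\circ\Delta=\Delta$ (needed to reindex the summation so the third Jacobi summand appears with the correct tensor factor in the right slot), collapses the whole expression to zero. That gives $dB(f)=0$, which is the claim.

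The main obstacle will be sign bookkeeping. Because $\psi$ has degree~$1$, each $\psi f^{(i)}$ carries a shifted degree, and applying the graded Leibniz rule, the graded Jacobi identity, and the cocommutativity flip $S$ in combination produces a forest of Koszul signs. Organizing these signs so that the three Jacobi summands actually appear with their correct coefficients is the only real content of the proof; once that is arranged, the cancellation is forced by the axioms of a DGLA and of a graded cocommutative coassociative coalgebra, and the argument is strictly analogous to the classical fact that the curvature of a Maurer--Cartan element in a DGLA is itself $d$-closed.
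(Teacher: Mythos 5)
The paper states this proposition without any proof at all --- it is asserted and the text moves directly on to the definition of Massey $F$-brackets, implicitly deferring to the classical references (Fialowski--Fuchs, Retakh) where the analogous lemma is proved. So there is no in-paper argument to compare yours against; what can be said is that your proposal reconstructs exactly the standard argument those sources use, and it is sound. The key points are all in place: you correctly use $\mathrm{Im}(\Delta)\subset F_1\otimes F_1$ to justify applying the Maurer--Cartan hypothesis to each component $f^{(i)}$ of $\Delta f$ (and again to their components), you apply the graded Leibniz rule for $d$ against $\nu$, and you invoke coassociativity to merge the two resulting triple sums over a single iterated coproduct $\sum f^{(1)}\otimes f^{(2)}\otimes f^{(3)}$, after which cocommutativity $S\circ\Delta=\Delta$ lets you permute tensor slots so that the graded Jacobi identity kills the expression. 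This is precisely the coalgebra-smeared version of the classical fact that for an odd Maurer--Cartan element $\omega$ one has $d[\omega,\omega]=-2[\omega,[\omega,\omega]]=0$. Your caveat about Koszul signs is the right one to flag: the cancellation genuinely depends on $\psi$ having degree $1$ (so the relevant elements of $C$ are odd) together with the signed cocommutativity of $\Delta$, and a complete write-up would have to carry those signs through explicitly; but the cancellation is forced once they are tracked, so there is no gap in the strategy.
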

\begin{defn}Let $a:F_0\rightarrow \emph{H}$  and $b: F/F_1\rightarrow \emph{H}$ two linear maps  of degree $1$ and $2$.
We say that $b$  is contained in the Massey $F$-bracket of $a$, and write $b\in[a]_F$,
if there exists a degree $1$ linear map $\psi:F_1\longrightarrow C$ satisfying
condition \eqref{cond4} and such that the following diagrams 
$$
\begin{array}{ccccc} F_0&\stackrel{\psi|{F_0}}{\longrightarrow}  &ker(d) \\\big\downarrow{id}
&&\big\downarrow \pi
\\F_0& \stackrel{a}{\longrightarrow}&\emph{H}\\
\end{array}\;\;\;\qquad
\begin{array}{ccccc} F&\stackrel{\nu\circ(\psi\otimes\psi)\circ\Delta}{\longrightarrow}  &ker(d) \\\big\downarrow{\pi}
&&\big\downarrow \pi\\ \frac{F}{F_1} & \stackrel{b}{\longrightarrow}&\emph{H}\\
\end{array}
$$
are commutative, where $\pi$ denotes the projection of each space onto the quotient space.
\end{defn}
Note that the upper horizontal maps of the above diagrams are well defined,
since $\psi(F_0)\subset \psi(ker\Delta) \subset ker(d)$,
and $\nu\circ(\alpha\otimes\alpha)\circ\Delta(F)\subset ker(d)$)
by Proposition \ref{prop}. 

The definition makes sense even if $F_1 = F$. In that case $Hom(F/F_1,\K)=0$, and $[a]_F$
may either be empty or contain 0. In that case we say that
a satisfies the condition of triviality of Massey $F$-brackets.

We consider the differential graded Lie algebra $( \mathcal{C} ^\ast(\LX,\LX), \nu, d)$.
 Let $F = F_1 =\mathfrak{m}'$,
 the dual of $\mathfrak{m}$ and $F_0 = (\mathfrak{m}/\mathfrak{m}^2)$. Let $\Delta: F\longrightarrow F\otimes F$
  be the comultiplication in $F$ which is the dual of the multiplication in $\mathfrak{m}$. Then $F$ is a
cocommutative coassociative coalgebra. 

For a linear functional $\phi: \mathfrak{m}\rightarrow \K$
define a map $\psi_\phi: \LX \otimes \LX \rightarrow\LX
$ by
$$\psi_\phi(l_1, l_2) = ( \phi\otimes id)([1 \otimes l_1, 1 \otimes l_2]_\lambda - 1 \otimes[l_1, l_2]).$$
This gives $\psi: \mathfrak{m} \rightarrow  \mathcal{C} ^2(\LX,\LX)$ by $\phi\rightarrow\psi_\phi$. From the definition it is clear that
 $[,]_\lambda $ and $\psi $ determine each other. Then we have
\begin{prop} The operation $[.,. ]_\lambda $ satisfies the Hom-Leibniz identity if and only if
$\psi$ satisfies the equation $d\psi-\frac{1}{2}\nu\circ(\psi\otimes\psi)\circ\Delta=0$.
\end{prop}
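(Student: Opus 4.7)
The plan is to translate the deformed Hom-Leibniz identity for $[\cdot,\cdot]_\lambda$ on $\A\otimes \LX$ into a condition on $\psi$ by pairing it against arbitrary $\xi\in\mathfrak{m}'$ and separating contributions according to their order in $\mathfrak{m}$. Concretely, fix a basis $\{m_i\}$ of $\mathfrak{m}$ with dual basis $\{\xi_i\}$ and write, as in Lemma \ref{lem},
$$[1\otimes l_1,1\otimes l_2]_\lambda = 1\otimes [l_1,l_2]+ \sum_{i} m_i\otimes \psi_{\xi_i}(l_1,l_2).$$

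First I would expand the Hom-Jacobi identity \eqref{HomJacobi} for $[\cdot,\cdot]_\lambda$ and $id\otimes\alpha$ applied to the triple $(1\otimes l_1,\,1\otimes l_2,\,1\otimes l_3)$. Each of the three nested brackets decomposes into (a) an undeformed contribution lying in $1\otimes \LX$, which cancels because $[\cdot,\cdot]$ already satisfies the Hom-Leibniz identity on $\LX$; (b) a \emph{linear} part lying in $\mathfrak{m}\otimes \LX$ coming from a single insertion of some $\psi_{\xi_i}$; and (c) a \emph{quadratic} part lying in $\mathfrak{m}^2\otimes \LX$ coming from two nested insertions.

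Next I would apply $\xi\otimes id$ for an arbitrary $\xi\in\mathfrak{m}'$. The linear contributions reproduce exactly the six coboundary terms from the computation in Lemma \ref{lem}; assembling them gives $\delta^2\psi_\xi(l_1,l_2,l_3) = (d\psi)(\xi)(l_1,l_2,l_3)$, where $d$ is the differential of $(\mathcal{C}^\ast(\LX,\LX),\nu,d)$. The quadratic contributions produce terms of the form $\xi(m_i m_j)\cdot \psi_{\xi_i}(\psi_{\xi_j}(\cdot,\cdot),\cdot)$ and the two analogous compositions dictated by the Hom-Leibniz identity. Using that $\Delta:\mathfrak{m}'\to\mathfrak{m}'\otimes\mathfrak{m}'$ is dual to the multiplication on $\mathfrak{m}$, one has $\xi(m_i m_j)=\Delta(\xi)(\xi_i\otimes\xi_j)$, and the sum is recognised as $\bigl[\nu\circ(\psi\otimes\psi)\circ\Delta\bigr](\xi)(l_1,l_2,l_3)$, with $\nu$ the Leibniz analogue of the Gerstenhaber-type bracket on $\mathcal{C}^\ast(\LX,\LX)$. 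The factor $\tfrac{1}{2}$ arises because the cocommutative comultiplication $\Delta$ records each unordered product $m_im_j$ twice.

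Putting the pieces together, the Hom-Leibniz identity for $[\cdot,\cdot]_\lambda$ holds if and only if, for every $\xi\in\mathfrak{m}'$ and every triple $(l_1,l_2,l_3)\in\LX^3$, the sum of the linear and quadratic evaluations vanishes. Since the arguments $\xi$ and the $l_i$ are arbitrary, this is equivalent to the Maurer-Cartan-type equation $d\psi-\tfrac{1}{2}\,\nu\circ(\psi\otimes\psi)\circ\Delta = 0$. The main obstacle is bookkeeping: one has to match carefully the six permutations that appear in the Hom-Leibniz identity with the images of $\Delta$ and with the two Leibniz cochain compositions that build $\nu$, checking that the signs conspire so that the overall coefficient in front of the quadratic term is exactly $\tfrac{1}{2}$.
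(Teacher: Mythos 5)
Your proposal is correct and follows essentially the same route as the paper: expand the deformed Hom-Jacobi identity on triples $1\otimes l_i$, pair with an arbitrary $\xi\in\mathfrak{m}'$, identify the linear-in-$\mathfrak{m}$ part with $\delta^2\psi_\xi=(d\psi)(\xi)$ and the quadratic part, via the duality $\xi(m_im_j)=\Delta(\xi)(\xi_i\otimes\xi_j)$, with $\nu\circ(\psi\otimes\psi)\circ\Delta$, the factor $\tfrac12$ coming from cocommutativity. No substantive difference from the paper's argument.
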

\begin{proof}
Let$\{m_i\}$ be a basis of $\mathfrak{m}$. We can write
$$ [1\otimes l_1,1\otimes l_2]_\lambda=1\otimes[l_1,l_2]+\sum_{i=1}^r m_i\otimes \psi_i(l_1,l_2)\; \hbox{for}\; l_1,l_2\in \LX,
$$
where  $\psi_i\in  \mathcal{C} ^2(\LX,\LX)$ is given by  $\psi_i = \psi_\mathfrak{m_i'}$.
Thus 
\begin{eqnarray*}
[1\otimes \alpha(l_1), [1\otimes l_2, 1\otimes l_3]_\lambda]_\lambda&=&\big[1\otimes \alpha(l_1), 1\otimes[l_2, l_3]+\sum_{i=1}^r m_i\psi_i(l_2, l_3)\big]_\lambda \\ &=& [1 \otimes \alpha(l_1), 1 \otimes[l_2, l_3]]_\lambda +\sum_i
m_i [1\otimes \alpha(l_1), 1 \otimes \psi_i(l_2, l_3)]_\lambda
\\ &=& 1\otimes[\alpha(l_1), [l_2, l_3]] +\sum_im_i\otimes\psi_i(\alpha(l_1), [l_2, l_3])\\ && +\sum_i m_i \otimes[\alpha(l_1),\psi_i(l_2, l_3)]
+\sum_{i,j}m_im_j\otimes\psi_j(\alpha(l_1), \psi_i(l_2, l_3)).
\end{eqnarray*}
Similarly  
\begin{eqnarray*}\big[[1\otimes l_1, 1\otimes l_2]_\lambda, 1\alpha(l_3)\big]_\lambda
&=& 1 \otimes[[l_1, l_2], \alpha(l_3)] +\sum_i
m_i\otimes \psi_i([l_1, l_2], \alpha(l_3))\\ & & +\sum_i
m_i\otimes [ \psi_i(l_1, l_2), \alpha(l_3)]\\ & &+\sum_{i,j}m_im_j \otimes\psi_j ( \psi_i(l_1, l_2), \alpha(l_3)),
\end{eqnarray*}
and
\begin{eqnarray*}
\big[[1\otimes l_1, 1\otimes l_3]_\lambda, 1\alpha(l_2)\big]_\lambda&= &1 \otimes[[l_1, l_3], \alpha(l_2)] +\sum_i
m_i \otimes\psi_i([l_1, l_3], \alpha(l_2)) \\ & &+\sum_i
m_i [ \psi_i(l_1, l_3), \alpha(l_2)]+\sum_{i,j}m_im_j \otimes\psi_j ( \psi_i(l_1, l_3), \alpha(l_2)).
\end{eqnarray*}
Let $\Delta(\phi)=\sum_p\xi_p\otimes\gamma_p$ for some $ \xi_p,\gamma_p\in \mathfrak{m}'$. 
We set
$\xi_p(m_i)=\xi_{p,i}$ and $\gamma_p(m_i)=\gamma_{p,i}$.
Thus 
$$\phi(m_im_j)=\Delta(\phi)(m_i\otimes m_j)=\sum_p(\xi_p\otimes\gamma_p)(m_i\otimes m_j)=\sum_p\xi_{p,i}\gamma_{p,j},$$
and 
\begin{eqnarray*}
(\phi\otimes id)\sum_{i,j}m_im_j\otimes \psi_j(\alpha(l_1), \psi_i(l_2, l_3))
 = \sum_{i,j,p}\xi_{p,i}\gamma_{p,j}\psi_j(\alpha(l_1), \psi_i(l_2, l_3))
\\=\sum_p \sum_j\gamma_{p,j}\psi_j(\alpha(l_1),\sum_i \xi_{p,i}\psi_i(l_2, l_3))
=\sum_p \psi_{\gamma_p}(\alpha(l_1),\psi_{\xi_p}(l_2, l_3)).
\end{eqnarray*}
Therefore 
\begin{eqnarray*}
&&(\phi\otimes id)([1\otimes \alpha(l_1), [1\otimes l_2, 1\otimes l_3]_\lambda]_\lambda)
\\&& =\sum_i\phi(m_i)\otimes \psi_i(\alpha(l_1), [l_2, l_3]) +\sum_i\phi(m_i)\otimes [\alpha(l_1),\psi_i(l_2, l_3)] Ê+\sum_p \psi_{\gamma_p}(\alpha(l_1),\psi_{\xi_p}(l_2, l_3))\\
&&=\psi_\phi(\alpha(l_1),[l_2,l_3])+[\alpha(l_1),\psi_\phi(l_2,l_3)]+\sum_p \psi_{\gamma_p}(\alpha(l_1),\psi_{\xi_p}(l_2, l_3)).
\end{eqnarray*}
Similarly, we calculate
$(\phi\otimes id)\big[[1\otimes l_1, 1\otimes l_2]_\lambda, 1 \otimes \alpha(l_3)\big]_\lambda$ and
$(\phi\otimes id)\big[[1\otimes l_1, 1\otimes l_3]_\lambda, 1 \otimes \alpha(l_2)\big]_\lambda$.
We get
 $$(\phi\otimes id)[1\otimes \alpha(l_1), [1\otimes l_2, 1\otimes l_3]_\lambda]_\lambda-  \big[[1\otimes l_1, 1\otimes l_2]_\lambda, 1\otimes\alpha(l_3)\big]_\lambda+\big[[1\otimes l_1, 1\otimes l_3]_\lambda, 1\otimes\alpha(l_2)\big]_\lambda$$
$$=(-d\psi+\frac{1}{2}\nu\circ(\psi\otimes\psi)\circ\Delta)\phi(l_1,l_2,l_3). $$
Thus, it follows that $[.,.]_\lambda$ satisfies the Hom-Leibniz identity if and only if $\psi$ satisfies the equation $d\psi-\frac{1}{2}\nu\circ(\psi\otimes\psi)\circ\Delta=0.$
\end{proof}
\begin{cor} A linear map $a : F_0\rightarrow H$ is a differential of some deformation
with base $\A$ if and only if $\frac{1}{2}a$ satisfies the condition of triviality of Massey F-brackets.
\end{cor}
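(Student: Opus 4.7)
The plan is to read this off the preceding proposition by a simple $\tfrac{1}{2}$-rescaling. That proposition sets up a bijection between Hom-Leibniz deformations $\lambda$ of $\LX$ with base $\A$ and degree-$1$ linear maps $\psi:\mathfrak{m}'\rightarrow \mathcal{C}^2(\LX,\LX)$ satisfying $d\psi=\tfrac{1}{2}\,\nu\circ(\psi\otimes\psi)\circ\Delta$, the correspondence being $[1\otimes l_1,1\otimes l_2]_\lambda=1\otimes[l_1,l_2]+\sum_i m_i\otimes\psi_{m_i'}(l_1,l_2)$ for a chosen basis $\{m_i\}$ of $\mathfrak{m}$ with dual $\{m_i'\}$. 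Setting $\tilde\psi=\tfrac{1}{2}\psi$ transforms this equation into the Massey condition \eqref{cond4}, namely $d\tilde\psi=\nu\circ(\tilde\psi\otimes\tilde\psi)\circ\Delta$, so the same bijection presents a deformation $\lambda$ as a degree-$1$ map $\tilde\psi:F_1\rightarrow \mathcal{C}^2(\LX,\LX)$ solving \eqref{cond4}.

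The next step is to match the differential $d\lambda:F_0\rightarrow\mathbb{H}$ with $\pi\circ\tilde\psi|_{F_0}$. For any $\phi\in F_0=(\mathfrak{m}/\mathfrak{m}^2)'$ one has $\Delta(\phi)=0$, since $\Delta$ is dual to multiplication in $\mathfrak{m}$ and $\phi$ annihilates $\mathfrak{m}^2$. The Maurer--Cartan equation above therefore forces $d\tilde\psi(\phi)=0$, so $\tilde\psi|_{F_0}$ lands in $Z^2(\LX,\LX)$. Comparing the formula $\psi_\phi(l_1,l_2)=(\phi\otimes id)([1\otimes l_1,1\otimes l_2]_\lambda-1\otimes[l_1,l_2])$ of the previous proposition with Lemma~\ref{lem} (and using $\phi(1)=0$), these cocycles coincide with the $\psi_{\lambda,\phi}$ whose cohomology classes define $d\lambda$. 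Hence $\pi\circ\psi|_{F_0}=d\lambda$ and $\pi\circ\tilde\psi|_{F_0}=\tfrac{1}{2}d\lambda$.

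With these dictionaries in place, both directions are immediate. If $d\lambda=a$, the associated $\tilde\psi$ witnesses $0\in[\tfrac{1}{2}a]_F$: condition \eqref{cond4} holds by construction, the first commutative triangle in the definition of $[a]_F$ commutes by the previous paragraph, and the second is vacuous because $F_1=F$. Conversely, any $\tilde\psi$ witnessing triviality of Massey $F$-brackets for $\tfrac{1}{2}a$ produces, via $\psi=2\tilde\psi$ and the previous proposition, a deformation $\lambda$ of $\LX$ with base $\A$ whose differential is $\pi\circ\psi|_{F_0}=a$.

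The main obstacle, and really the only nontrivial bookkeeping, is the scalar accounting between the three statements: Lemma~\ref{lem} produces the cocycles representing $d\lambda$ without any factor, the previous proposition introduces a $\tfrac{1}{2}$ in the Maurer--Cartan equation, and the Massey condition \eqref{cond4} has no $\tfrac{1}{2}$. One must verify each identification (in particular $\psi_\phi=\psi_{\lambda,\phi}$ on $F_0$) by matching formulas directly, since a stray factor of $2$ at any stage would swap $a$ and $\tfrac{1}{2}a$ in the statement.
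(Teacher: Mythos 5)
Your argument is correct and is exactly the intended route: the paper states this corollary without proof as an immediate consequence of the preceding proposition, and your $\tilde\psi=\tfrac12\psi$ rescaling (turning $d\psi=\tfrac12\,\nu\circ(\psi\otimes\psi)\circ\Delta$ into condition \eqref{cond4} and $\pi\circ\psi|_{F_0}=d\lambda$ into $\pi\circ\tilde\psi|_{F_0}=\tfrac12 d\lambda$) is precisely the bookkeeping the authors leave implicit. Your identification $\psi_\phi=\psi_{\lambda,\phi}$ on $F_0$ and the observation that $F_0\subset\ker\Delta$ makes the first diagram land in $\ker(d)$ also match the paper's remarks following the definition of the Massey $F$-bracket.
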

\begin{thm} The obstruction $ \theta_\lambda$ has the property, $2w_k= [id]_F$ . Moreover,
an arbitrary element of $[id]_F$ is equal to $2w_k$ for an appropriate extension of the
deformation $\eta_1$, of $\LX
$ with base $C_1$,  to a deformation $\eta_k$ of $\LX
$ with base $C_k$.
\end{thm}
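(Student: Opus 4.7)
The plan is to match the definition of the Massey $F$-bracket with the explicit construction of the obstruction $w_k$ from Section 4, and then to run the construction in reverse for the ``moreover'' part.

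First I would unpack the structures. For the deformation $\eta_k$ with base $C_k$, take $F = \mathfrak{m}_{C_k}'$ and $F_0 = (\mathfrak{m}_{C_k}/\mathfrak{m}_{C_k}^2)'$, and fix the filtration so that $F/F_1$ is identified, via duality of the Harrison chain complex $Ch_2(C_k)$, with a space naturally paired against $H^2_{Harr}(C_k,\K)$. The bracket $[.,.]_{\eta_k}$ determines $\psi_k: \mathfrak{m}_{C_k}' \to \mathcal{C}^2(\LX,\LX)$ by $\psi_{k,\phi}(l_1,l_2) = (\phi\otimes id)([1\otimes l_1,1\otimes l_2]_{\eta_k} - 1\otimes[l_1,l_2])$. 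By the preceding Proposition it satisfies $d\psi_k - \tfrac{1}{2}\nu\circ(\psi_k\otimes\psi_k)\circ\Delta = 0$, so after rescaling by $2$ the map fulfils the defining equation \eqref{cond4} of a witness in a Massey bracket.

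Next I would verify the two commutative diagrams defining membership in $[id]_F$. Because $\eta_k$ lifts the universal infinitesimal deformation $\eta_1$, the restriction $\psi_k|_{F_0}$ agrees in cohomology with the map $\mu$ chosen in Section 3, and under the canonical identification $(\mathfrak{m}_{C_k}/\mathfrak{m}_{C_k}^2)' \cong \mathbb{H}$ given by first-order coefficients it becomes the identity of $\mathbb{H}$; this produces the first diagram with $a = id$. For the second diagram, I would use the Section 4 description of the obstruction: paired against a Harrison $2$-cocycle $f$, the class $w_k([f]) \in H^3(\LX,\LX)$ is represented by the cocycle $\bar{\phi}$ obtained by lifting $[.,.]_{\eta_k}$ to $\B\otimes\LX$, where $\B$ realises the extension of $C_k$ classified by $[f]$. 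A direct computation identifies this cocycle with $\nu\circ(\psi_k\otimes\psi_k)\circ\Delta$ evaluated against $[f]$, and after accounting for the factor $2$ from the rescaling one obtains $2w_k \in [id]_F$. For the ``moreover'' direction I would reverse the procedure: given any $b \in [id]_F$ realised by some $\psi: F_1 \to \mathcal{C}^2(\LX,\LX)$ satisfying \eqref{cond4} and the two diagrams with $a = id$, define a $C_k$-bilinear bracket on $C_k\otimes\LX$ by $[1\otimes l_1, 1\otimes l_2] = 1\otimes[l_1,l_2] + \sum_i m_i\otimes \tfrac{1}{2}\psi_{\xi_i}(l_1,l_2)$ with $\{m_i\}$ a basis of $\mathfrak{m}_{C_k}$ and $\{\xi_i\}$ its dual basis; by the preceding Proposition this satisfies the Hom-Jacobi identity, hence defines a deformation $\eta_k$ extending $\eta_1$ whose associated obstruction $w_k$ equals $b/2$.

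The main obstacle will be the identification of $F/F_1$ with the dual of $Ch_2(C_k)$, and the verification that under this identification the map $\nu\circ(\psi_k\otimes\psi_k)\circ\Delta$ taken modulo $F_1$ agrees with the explicit cocycle $\bar{\phi}$ produced in Section 4 by lifting the deformation bracket to an extension $\B\otimes\LX$. This requires carefully tracing how the comultiplication $\Delta$, which is dual to multiplication in $\mathfrak{m}_{C_k}$, interacts with the Harrison cocycle $f$ classifying the extension, and matching it with the squared bracket term that measures the failure of the Hom-Leibniz identity on the lift. A secondary subtlety is the factor of two, present because the deformation equation carries $\tfrac{1}{2}$ whereas \eqref{cond4} does not, which is precisely what produces $2w_k = [id]_F$ rather than $w_k = [id]_F$.
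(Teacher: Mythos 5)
Your proposal follows essentially the same route as the paper: define $\psi_k$ from the bracket $[.,.]_{\eta_k}$, invoke the preceding proposition to get $d\psi_k=\tfrac{1}{2}\nu\circ(\psi_k\otimes\psi_k)\circ\Delta$, observe that $\pi\circ\psi_k|_{F_0}=id$ since $\eta_k$ extends $\eta_1$, identify $\nu\circ(\psi_k\otimes\psi_k)\circ\Delta$ modulo $F_1$ with the Section~4 obstruction cocycle via the lift to the extension, and reverse the construction for the converse. The only quibble is the direction of the rescaling (it is $\tfrac{1}{2}\psi_k$, not $2\psi_k$, that satisfies \eqref{cond4}), but you flag the factor-of-two bookkeeping explicitly and it is handled no less carefully than in the paper itself.
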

\begin{proof}
As above we define the maps $$\psi_k:\mathfrak{m}_k'\longrightarrow  \mathcal{C} ^2(\LX,\LX)$$
by $\psi_\phi(l_1,l_2)=(\phi\otimes id)\big([1\otimes l_1,1\otimes l_2]_{\eta_k}-1\otimes[l_1,l_2]\big)$
for $\phi \in \mathfrak{m}_k'$ and $l_1,l_2\in \LX
$, using the deformation  $\eta_k$ with base $C_k$.
Since $\eta_k$ is a Hom-Leibniz algebra structure on $C_k\otimes \LX
$,
Proposition \ref{prop} implies $$d\psi=\frac{1}{2}\nu\circ(\psi\otimes\psi)\circ\Delta. $$
Different $\psi$ with these properties
correspond to different  extensions $\eta_k$ of $\eta_1$.

The ${\psi_k}_{|F_0}$ is given by $\psi_k(h_i)=\mu(h_i)$,
a representative of the cohomology class $h_i$.
So  $a=\pi\circ{ {\psi_k}_{|F_0}}=id$.
In the definition of Massey $F$-bracket, the map
 $b: \frac{F}{F_1}\rightarrow H^3(\LX,\LX)$.
If we consider $\{m_i\}_{1\leq i\leq r}$  a basis  of $\mathfrak{m}_k$
and extend it to a basis $\{\bar{m}_i\}_{1\leq i\leq r+s}$  of
$\mathfrak{\bar{m}}_k$, the bracket is  given by
$$ [1\otimes l_1,1\otimes l_2]_{\eta_k}=1\otimes [l_1,l_2]+\sum_{i=1}^{r}\bar{m}_i\otimes\psi_i(l_1,l_2)$$
for  arbitrary cochains $\psi_i \in  \mathcal{C} ^2(\LX,\LX)$ for $r\leq i\leq s$
the $\bar{C}_{k+1}$-bilinear map  $\{ . ,. \}$
on $\bar{C}_{k+1}\otimes \LX
$ is given by
\begin{eqnarray*}
\{1\otimes l_1,1\otimes l_2\}&=&1\otimes [l_1,l_2]+\sum_{i=1}^{r+s}\bar{m}_i\otimes\psi_i(l_1,l_2)\{\{1\otimes l_1,1\otimes l_2\},1\otimes \alpha(l_3)\}\\
& =& 1 \otimes[[l_1, l_2], \alpha(l_3)] +\sum_{i=1} ^{r+s}\bar{m}_i\otimes \psi_i([l_1, l_2], \alpha(l_3)) \\
& &+\sum_{i=1}^{r+s} \bar{m}_i\otimes [ \psi_i(l_1, l_2), \alpha(l_3)] +\sum_{i,j=1}^{r+s}\bar{m}_i\bar{m}_j \otimes\psi_j ( \psi_i(l_1, l_2),\alpha(l_3))\\
&=& 1 \otimes[[l_1, l_2], \alpha(l_3)] +\sum_{i=1} ^{r+s}\bar{m}_i\otimes \psi_i([l_1, l_2], \alpha(l_3)) \\
& & +\sum_{i=1}^{r+s} \bar{m}_i\otimes [ \psi_i(l_1, l_2), \alpha(l_3)]+\sum_{i,j=1}^{r+s}\sum_{p=1}^{r}c_{ij}^p\bar{m}_p \otimes\psi_j ( \psi_i(l_1, l_2),\alpha(l_3)).
\end{eqnarray*}
Similarly, we calculate $\{1\otimes \alpha(l_1),\{1\otimes l_2,1\otimes l_3\}\}$
and $\{\{1\otimes l_1,1\otimes l_3\},1\otimes \alpha(l_2)\}.$

Therefore 
\begin{eqnarray*}
&&(\bar{m}'_p\otimes id)(\{1\otimes \alpha(l_1),\{1\otimes l_2,1\otimes l_3\}\}- \{\{1\otimes l_1,1\otimes l_2\},1\otimes \alpha(l_3)\}\\&& +\{\{1\otimes l_1,1\otimes l_3\},1\otimes \alpha(l_2)\}) \\&& 
=d\psi_p(l_1,l_2,l_3)+\frac{1}{2} \nu\circ(\psi\otimes\psi)\circ\Delta(\bar{m}'_p)(l_1,l_2,l_3).
\end{eqnarray*}
The result follows by taking $b=2w_k$ and $a=id_{|H}$.
\end{proof}

\section{Examples}
 Let  $\LX
$ be a 3-dimensional vector
 space with basis $\{e_1,e_2,e_3\}$. Define a bracket
$[.,.]:\LX
\otimes \LX
 \rightarrow \LX
$ by
\begin{equation}\label{CrochetExample}
[e_1, e_3] = e_2,\; \;[e_3, e_3] = e_1,
\end{equation}
and, all other brackets of basis elements being zero.   The triple $(\LX , [. , . ], \alpha )$ defines a Hom-Leibniz algebra if $\alpha$ is, with respect to the basis $\{ e_1,e_2,e_3\}$, of the form
\begin{equation*}
\begin{cases}
 \alpha (e_1)=a_{11}e_1+a_{21} e_2,\\
 \alpha (e_2)=a_{12}e_1+a_{22} e_2,\\
 \alpha (e_3)=a_{13}e_1+a_{23} e_2+a_{33}e_3,
\end{cases}
\end{equation*}
where $a,\ b$ are arbitrary parameters.
The linear map $\alpha$ is multiplicative if it reduces to
\begin{equation}\label{MultiAlphaExamp}
\begin{cases}
 \alpha (e_1)=c^2 e_1+a c e_2,\\
 \alpha (e_2)=c^3 e_2,\\
 \alpha (e_3)=a e_1+b  e_2+ce_3,
\end{cases}
\end{equation}
where $a,b,c$ are arbitrary parameters.

To construct a versal deformation of  the multiplicative Hom-Leibniz algebra $(\LX
,[.,.  ],\alpha)$,
we need first to calculate cohomology spaces. We  consider two different cases for $\alpha$.

\begin{example}[$c\neq1,c\neq0$ and arbitrary $a,b$]
Let $(\LX , [\ , \ ], \alpha )$ be a Hom-Leibniz algebra, where the bracket is defined in \ref{CrochetExample} and the map $\alpha$ as
\begin{equation}\label{MultiAlphaExamp}
\begin{cases}
 \alpha (e_1)= e_1+a  e_2,\\
 \alpha (e_2)= e_2,\\
 \alpha (e_3)=a e_1+b  e_2+e_3,
\end{cases}
\end{equation}
where $a,b$ are arbitrary parameters.
Let $\phi\in Z^2(\LX,\LX)$ be a 2-cocycle. Then $\phi:\LX
\otimes\LX
\rightarrow\LX
$
is a linear map satisfying $\delta\phi(e_i,e_j,e_k)=0$ and $\phi(\alpha(e_i),\alpha(e_j))=\alpha(\phi(e_i,e_j))$,
 for $ 1\leq i,j,k\leq3.$

The matrix $M_\phi$ of $\phi$, with respect to the ordered basis $B=\{e_i\otimes e_j\}_{1\leq i,j\leq3}$ of $\LX\otimes \LX
$, is given  $\forall a,b \in \R$ and $c=1$, by

$$\delta\phi=0  \longleftrightarrow
 M_\phi=\left(
                                                \begin{array}{ccccccccc}
                                                  0 & 0 & x_3 & 0 & 0& x_5 & x_1 & x_2 & x_7 \\
                                                  x_1 & x_2 & x_4 & 0 & 0 & x_6 & 0 & ax_2 & x_8 \\
                                                  0 & 0 & -x_2& 0 & 0 & 0  & 0 & 0 & -x_1 \\
                                                \end{array}
                                              \right)
$$

If in addition we have the condition
$\phi(\alpha(e_i),\alpha(e_j))=\alpha(\phi(e_i,e_j))$, we obtain
$$\left\{
     \begin{array}{ll}
       \delta\phi=0,\; \\
and \hskip 3cm\longleftrightarrow\\
       \phi(\alpha(e_i),\alpha(e_j))=\alpha(\phi(e_i,e_j))
     \end{array}
   \right.
   M_\phi=\left(
                                                                                                \begin{array}{ccccccccc}
                                                                                                  0 & 0 & 0 & 0 & 0 & 0 & 0 & 0 & 0 \\
                                                                                                  0 & 0 & 0 & 0 & 0 & 0 & 0 & 0 & 1 \\
                                                                                                  0 & 0 & 0 & 0 & 0 & 0 & 0 & 0 & 0 \\
                                                                                                \end{array}
                                                                                            \right)$$

Hence $Z^2(\LX,\LX)=\langle\left(
                                                                                                \begin{array}{ccccccccc}
                                                                                                  0 & 0 & 0 & 0 & 0 & 0 & 0 & 0 & 0 \\
                                                                                                  0 & 0 & 0 & 0 & 0 & 0 & 0 & 0 & 1 \\
                                                                                                  0 & 0 & 0 & 0 & 0 & 0 & 0 & 0 & 0 \\
                                                                                                \end{array}
                                                                                              \right)\rangle =\langle E_{29}\rangle$
\\ Let  $\phi_0\in B^2(\LX,\LX)$. Then there is a 1-cochain $f\in \mathcal{C}^1(\LX,\LX)=Hom(\LX,\LX)$ such that $\phi_0=\delta f$.
Let $f(e_i) =x_ie_1 + y_ie_2 + z_3e_3$ for $i = 1, 2, 3$.
We have
$$\delta f(e_i, e_j) = [e_i, f(e_j)] + [f(e_i), e_j ] -f([e_i, e_j ]).$$
Then  the matrix of $\delta f$  can be written as
$$\delta f=\left(
             \begin{array}{ccccccccc}
               0 & 0 &z_1-x_2  & 0 & 0 & z_2 &z_1 & z_2 &2z_3-x_1\\
               z_1 & z_2 &z_3+x_1-y_2  & 0 & 0 & x_2 & 0 & 0 &x_3-y_1  \\
               0 & 0 & -z_2 & 0 & 0 & 0 & 0 & 0 &    -z_1\\
             \end{array}
           \right)$$
Since $\phi_0=\delta f$ and  $\phi_0(\alpha(e_i),\alpha(e_j))=\alpha_0(\phi(e_i,e_j)),$
it turns out  that:
\begin{equation*}
\phi_0= x E_{29}
\end{equation*}

\begin{equation*}
    B^2(\LX,\LX)=\langle E_{29}\rangle
\end{equation*}
It follows that
 if $c\neq1,c\neq0$ and $\forall a,b\in \R$ we have

  \begin{equation*}
  H^2(\LX
,\LX
)=\{0\}.\end{equation*}

Hence, every formal deformation  is equivalent to
a trivial deformation.

\end{example}
\begin{example}[$c=0, \;a\neq 0$ and  $b$ arbitrary]
Let $(\LX , [. , . ], \alpha )$ be a Hom-Leibniz algebra, where the bracket is defined in \ref{CrochetExample} and the map $\alpha$ as
\begin{equation}\label{MultiAlphaExamp}
\begin{cases}
\alpha (e_1)=0,\\
 \alpha (e_2)=0,\\
 \alpha (e_3)=a e_1+b  e_2,
\end{cases}
\end{equation}
where $a,b$ are arbitrary parameters.

In this case we have

$\; \delta^2\phi=0  \longleftrightarrow$ $$ M_\phi=\left(
                                             \begin{array}{ccccccccc}
                                               \frac{-b}{a}x_{14} &\frac{-b}{a}x_{15}  & x_{13} & x_{14} &x_{15}&x_{16} & x_{17} & x_{18} &
                                               x_{19} \\
                                               x_{21} & 0 & x_{23}&x_{24} &x_{25} &x_{26} &x_{27} & x_{28} & x_{29} \\
                                               0 & 0 & \frac{-b}{a}x_{25} & 0 & 0 & 0 & 0 & 0 &-ax_{21}-bx_{24}\\
                                             \end{array}
                                           \right)
$$

$$\left\{
  \begin{array}{ll}
 \delta^2\phi=0\\
and \hskip 3 cm \longleftrightarrow \\
    \phi(\alpha(e_i),\alpha(e_j))=\alpha(\phi(e_i,e_j))
  \end{array}
\right.
M_\phi=\left(
                                                                                                 \begin{array}{ccccccccc}
                                                                                                   0 & 0& x_1 & 0 &0 & x_4 & x_6 & x_7 & x_8 \\
                                                                                                   0 & 0& x_2 & 0 & 0 & x_5& x_9 & x_{10} & x_{11} \\
                                                                                                   0 &0 &x_3 & 0 & 0 & 0 & 0 & 0 & 0 \\
                                                                                                 \end{array}
                                                                                               \right)
$$
Hence

 \begin{equation*}
 Z^2(\LX,\LX)=\langle E_{13},E_{16},E_{17},E_{18},E_{19},E_{23},E_{26},E_{27},E_{28},E_{29},E_{33}\rangle .
\end{equation*}

By direct calculations we obtain
$$H^2(\LX,\LX)=\langle E_{16},E_{27},E_{28},E_{13},E_{33},E_{17},E_{18}\rangle=\langle\mu_1,\mu_2,\mu_3,\mu_4,\mu_5,\mu_6,\mu_7\rangle .$$
Since we are dealing with infinitesimal deformations, it turns out that we obtain
$$[1\otimes e_i,1\otimes e_j]_{\eta_1}=1\otimes[e_i,e_j]+\sum_{k=1}^7t_k\otimes\mu_k,$$ where $t^k$ corresponds to the dual of $\mu_k$.

\end{example}
\begin{example}
Now, we consider new brackets obtained using twisting principle. Set
\begin{align*}& [e_1, e_3] = e_2,\\
& [e_3, e_3] = e_1+a e_2,
\end{align*}
and $\alpha$ defined as
\begin{align*}
& \alpha (e_1)=e_1+a e_2,\\
& \alpha (e_2)=e_2,\\
& \alpha (e_3)=a e_1+b e_2+e_3,
\end{align*}
where $a,\ b$ are arbitrary parameters.

$\; \delta^2\phi=0  \longleftrightarrow$
\begin{equation}\label{MATR}
    \left(
      \begin{array}{ccccccccc}
        0 & 0 & x & 0 & 0 & z & 0 & 0 & v \\
         0& 0 & y& 0& 0 & t& -au & u & w\\
        0 & 0 & 0 & 0 & 0 & 0 & 0 & 0 & 0 \\
      \end{array}
    \right)
\end{equation}
since $\alpha$ respect the bracket  we obtain  the  space of 2-Hom-cocycles
$$Z^2(\LX,\LX)=\langle E_{23},E_{26},E_{29} \rangle$$
 the  space  of 
2-Hom-coboundaries  
$B^2(\LX,\LX)=\langle E_{23},E_{29} \rangle$

Then the  second cohomology group of the Hom-Leibniz algebra $(\LX, [\ , \ ],\alpha)$ is one-dimensional and  is given by:
\begin{equation*}
    H^2(\LX,\LX)
    =\langle E_{26} \rangle.
\end{equation*}
\end{example}

\end{document}